\def\satotate{{\rm ST}}
\numberwithin{equation}{section}
\numberwithin{table}{section}
\newtheorem{theorem}{Theorem}[section]
\newtheorem{lemma}[theorem]{Lemma}
\newtheorem{proposition}[theorem]{Proposition}
\newtheorem{corollary}[theorem]{Corollary}
\theoremstyle{definition}
\newtheorem{example}[theorem]{Example}
\newcommand{\stk}[1]{{\mathcal #1}}
\def\setcomp{\smallsetminus}
\def\one{{1}}
\def\GSp{\gsp}
\def\cent{{\mathcal Z}}
\def\cclass{{\mathcal C}}
\def\cc{\iota}
\def\O{{\mathcal O}}
\def\et{{\rm et}}
\def\tor{{\rm tor}}
\newcommand{\set}[1]{\left\{#1\right\}}
\newcommand{\gen}[1]{\left\langle#1\right\rangle}
\DeclareMathOperator{\GL}{GL}
\DeclareMathOperator{\Split}{Split}
\DeclareMathOperator{\Gal}{Gal}
\DeclareMathOperator{\Frob}{Frob}
\DeclareMathOperator{\cond}{cond}
\newcommand{\Z}{\mathbb Z}
\newcommand{\Q}{\mathbb Q}
\newcommand{\F}{\mathbb F}
\renewcommand{\split}{\textbf{Split}\xspace}
\newcommand{\dqs}{\textbf{DQ-S}\xspace}
\newcommand{\dqi}{\textbf{DQ-I}\xspace}
\newcommand{\quartic}{\textbf{Quartic}\xspace}
\newcommand{\qrl}{\textbf{QRL}\xspace}
\newcommand{\drls}{\textbf{DRL-S}\xspace}
\newcommand{\drli}{\textbf{DRL-I}\xspace}
\title{Local heuristics and an exact formula for abelian surfaces over finite fields}
\author{Jeffrey D. Achter}
\address{Colorado State University, Fort Collins, CO 80523-1874}
\email{achter@math.colostate.edu}
\urladdr{http://www.math.colostate.edu/~achter}
\thanks{JDA was partially supported by grants from the Simons
  Foundation (204164) and the NSA (H98230-14-1-0161).}
\author{Cassandra Williams}
\address{James Madison University, Harrisonburg, VA 22807}
\email{willi5cl@jmu.edu}
\urladdr{http://educ.jmu.edu/~willi5cl}
\begin{document}

\begin{abstract}
Consider a quartic $q$-Weil polynomial $f$.  Motivated by equidistribution
considerations we define, for each prime $\ell$, a local factor which
measures the relative frequency with which $f\bmod \ell$ occurs as the
characteristic polynomial of a symplectic similitude over $\ff_\ell$.  For a certain
class of polynomials, we show that the resulting infinite product
calculates the number of principally polarized abelian surfaces over $\ff_q$
with Weil polynomial $f$.
\end{abstract}

\maketitle

\section{Introduction}

Consider abelian varieties over a finite field.  To each such $X/\ff_q$ one may associate a characteristic polynomial of Frobenius, $f_{X/\ff_q}(T) \in \integ[T]$; and two abelian varieties $X$ and $Y$ are isogenous if and only if $f_{X/\ff_q}(T) = f_{Y/\ff_q}(T)$.  In this way, isogeny classes of abelian varieties over $\ff_q$ are parametrized by suitable $q$-Weil polynomials $f(T)$.

Conversely, given such a polynomial $f$, it is of intrinsic interest to calculate how many abelian varieties are in the corresponding isogeny class.  In fact, a polarized variant of this problem seems even more natural.  Let $\stk A_g$ be the moduli space of principally polarized abelian varieties of dimension $g$, and let
\[
\stk A_g(\ff_q;f) = \st{(X,\lambda) \in \stk A_g(\ff_q): f_{X/\ff_q}(T) = f(T)}.
\]
Armed with an (overly optimistic) equidistribution philosophy, one might attempt to estimate $\#\stk A_g(\ff_q;f)$ in the following fashion.  To the extent possible, Frobenius elements of abelian varieties are equidistributed in $\gsp_{2g}(\ff_\ell)$.  By somehow multiplying together, over all $\ell$, the frequency with which $f(T)\bmod \ell$ occurs as the characteristic polynomial of a symplectic similitude, one might try to apprehend $\#\stk A_g(\ff_q;f)$.

As written, this strategy is nonsense; for given $g$ and $\ell$, $\bmod \:\ell$ Frobenius elements are equidistributed only if $q \gg_g \ell$.  Nonetheless, such congruence considerations apparently control the sizes of isogeny classes.  

Our main result is as follows.  For $f$ in a certain class of simple, ordinary $q$-Weil polynomials of degree $4$ (see Section \ref{secweilpolys}), we define for each prime $\ell$ a quantity $\nu_\ell(f)$ (see \eqref{eqdefnuell}) which measures its relative frequency as the characteristic polynomial of an element $\gsp_4(\ff_\ell)$.  After defining a Sato-Tate term $\nu_\infty(f)$, we show that
\begin{equation}
\label{eqintrotheorem}
\nu_\infty(f)\prod_\ell \nu_\ell(f) = \#\stk A_2(\ff_q;f)
\end{equation}
(where a principally polarized abelian surface is given mass inversely proportional to the size of its automorphism group).

The present work is inspired by work of Gekeler \cite{gekeler03}, who derived a version of \eqref{eqintrotheorem} for elliptic curves over a finite prime field.  Our perspective was influenced by Katz's analysis \cite{katz-lt} of Gekeler's product formula.

\subsection*{Acknowledgments}
We thank Everett Howe for sharing with us his work on principally
polarized abelian varieties within a given isogeny class.  We further
thank the referee for helpful suggestions.

\section{Abelian varieties and Weil polynomials}\label{secweilpolys}

Let $X/\ff_q$ be an abelian variety of dimension $g$ over a finite
field with $q = p^e$ elements, and let
$f_{X/\ff_q}(T)\in \integ[T]$ be the characteristic polynomial of its Frobenius
endomorphism (acting on, say, any of the Tate modules $T_\ell X$ with
$\ell\not = p$).  Then $f_{X/\ff_q}(T)$ is a $q$-Weil polynomial,
i.e., the complex roots $\alpha_1, \cdots,
\alpha_{2g}$ of $f_{X/\ff_q}(T)$ may be ordered so that $\alpha_j
\alpha_{g+j} = q$ for $1 \le j \le g$, and in fact $\abs{\alpha_j} = \sqrt q$ for each $j$.  

Now assume that $f(T)$ is a $q$-Weil polynomial of degree $4$; such a
polynomial corresponds to a (possibly empty) isogeny class $\cali_f$ of abelian
surfaces over $\ff_q$.  In the
sequel, we will assume that $f$ is:
\begin{enumerate}\def\theenumi{W.\arabic{enumi}}
\item {\em ordinary} \label{enord} its middle coefficient is relatively
  prime to $p$;
\item {\em principally polarizable} \label{enpp} there exists a principally polarized
  abelian surface with characteristic polynomial $f$;
\item{\em Galois}\label{engal}  the polynomial $f(T)$ is irreducible over $\rat$, and
  $K_f := \rat(T)/f(T)$ is Galois and unramified at $p$;
\item{\em maximal}\label{enmax} let $\varpi_f$ be a (complex) root of $f(T)$,
  with complex conjugate $\bar\varpi_f$.  Then $\calo_f :=
  \integ[\varpi_f,\bar\varpi_f]$, {\em a priori} an order in $K_f$, is
  actually the maximal order $\calo_{K_f}$.
\end{enumerate}
Conditions \ref{enord} and \ref{engal} imply that any abelian
surface in $\stk A_2(\ff_q;f)$ is ordinary and simple.
Condition \ref{enpp} is explicitly characterized, in terms of the coefficients
of $f$, in \cite[Thm.\ 1.3]{howe95}.  Condition \ref{enmax} is indeed an
extra hypothesis, which we hope to relax in a future work. The
isomorphism class of $\calo_f$, as an abstract order, is independent of the
choice of $\varpi_f$.

Note that $\gal(K_f/\rat)$ is abelian, since $[K_f:\rat]=4$; and there is an intrinsically defined complex conjugation $\iota \in \gal(K_f/\rat)$, since $K_f$ is a CM field. As in the description of condition \ref{enmax}, we will often denote the action of $\iota$ on an element $\alpha \in K_f$ by $\bar\alpha = \iota(\alpha)$. If $R$ is any ring, then $\iota$ acts on $\calo_{K_f}\tensor R$ via the first component.

\begin{example}
The polynomial $f(T) = T^{4} + 29T^{3} + 331T^{2} + 1769T + 3721$ is a
61-Weil polynomial which is ordinary, principally polarizable, Galois
and maximal.  In fact, $\calo_f$  is the ring of
integers in $\rat(\zeta_5)$, and $f$ is the characteristic polynomial of
Frobenius of the Jacobian of the curve with affine equation $y^2 =
x^5-2$.
\end{example}

Define the conductor of $f$, $\cond(f)$, as the index of
$\integ[\varpi_f]\iso \integ[T]/f(T)$ in $\calo_f$.  If $f(T) = T^4-aT^3+bT^2-aTq+q^2$, let
$f^+(T) = T^2 - aT+(b-2q)$; then $f^+(T)$ is the minimal polynomial
of $\varpi_f + \bar\varpi_f$, and $K^+_f := \rat[T]/f^+(T)$ is
the maximal totally real subfield of $K_f$.  Denote the discriminants
of $f(T)$ and $f^+(T)$ by $\Delta_f$ and $\Delta_{f^+}$, respectively. 
Similarly, let $\Delta_{\O}$ represent the discriminant of an order $\O$; notice that $\Delta_{\Z[\varpi_f]}=\Delta_f$ and $\Delta_{\O_{K^+}}=\Delta_{f^+}$.

\begin{lemma}
\label{lemconductorq}
The index of $\integ[\varpi_f]$ in $\calo_f$ is $q$.
\end{lemma}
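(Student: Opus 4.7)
The plan is a direct comparison of $\mathbb{Z}$-bases: I will exhibit an explicit $\mathbb{Z}$-basis of $\mathcal{O}_f$, compare it against the natural basis $\{1,\varpi_f,\varpi_f^2,\varpi_f^3\}$ of $\mathbb{Z}[\varpi_f]$, and read off the index as the absolute value of the determinant of the change-of-basis matrix.

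To set up the basis, put $\alpha = \varpi_f + \bar\varpi_f$, so that $\alpha$ is a root of $f^+(T)$ and $\bar\varpi_f = \alpha - \varpi_f$. Consequently $\mathcal{O}_f = \mathbb{Z}[\varpi_f,\bar\varpi_f] = \mathbb{Z}[\varpi_f,\alpha]$. Since $\varpi_f$ is a root of the monic quadratic $T^2 - \alpha T + q \in \mathbb{Z}[\alpha][T]$, the ring $\mathbb{Z}[\varpi_f,\alpha]$ is a free $\mathbb{Z}[\alpha]$-module of rank $2$ with basis $\{1,\varpi_f\}$. This provides half of the sought $\mathbb{Z}$-basis, but to make it useful I need to identify $\mathbb{Z}[\alpha]$ with the full ring of integers $\mathcal{O}_{K_f^+}$.

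For that identification, given $\beta \in \mathcal{O}_{K_f^+} \subseteq \mathcal{O}_f$, write $\beta = u + v\varpi_f$ with $u,v \in \mathbb{Z}[\alpha]$. Because $\varpi_f \notin K_f^+$, the $\mathbb{Q}$-subspaces $K_f^+$ and $K_f^+\varpi_f$ of $K_f$ intersect trivially; the condition $\beta \in K_f^+$ then forces $v=0$ and $\beta \in \mathbb{Z}[\alpha]$. Hence $\mathcal{O}_{K_f^+} = \mathbb{Z}[\alpha]$, and $\mathcal{O}_f$ admits the $\mathbb{Z}$-basis $\{1,\alpha,\varpi_f,\alpha\varpi_f\}$.

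It then remains to expand each $\varpi_f^k$ in this basis. Using the relations $\varpi_f^2 = \alpha\varpi_f - q$ and $\alpha^2 = a\alpha - b + 2q$, one readily computes $\varpi_f^3 = -q\alpha + (q-b)\varpi_f + a\alpha\varpi_f$, and the resulting $4\times 4$ change-of-basis matrix has second row $(0,0,1,0)$; cofactor expansion along that row reduces it to a $3\times 3$ determinant which evaluates to $\pm q$. The main conceptual hurdle is the identification $\mathbb{Z}[\alpha] = \mathcal{O}_{K_f^+}$, which is precisely where the maximality hypothesis W.4 enters; without it one would only control $[\mathcal{O}_f : \mathbb{Z}[\varpi_f]]$ up to the additional factor $[\mathcal{O}_{K_f^+} : \mathbb{Z}[\alpha]]$. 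Once that step is secured, the remaining determinant computation is routine.
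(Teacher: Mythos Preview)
Your proof is correct, but it takes a different route from the paper's. The paper compares discriminants: it invokes formulas from Howe to compute $\Delta_{\mathcal{O}_f}=\Delta_{f^+}^2\cdot N_{K_f/\mathbb{Q}}(\varpi_f-\bar\varpi_f)$ explicitly, then computes $\Delta_f$ directly and observes $\Delta_f=q^2\Delta_{\mathcal{O}_f}$, whence the index is $q$. Your approach instead produces an explicit $\mathbb{Z}$-basis $\{1,\alpha,\varpi_f,\alpha\varpi_f\}$ of $\mathcal{O}_f$ and reads off the index as a change-of-basis determinant. Your argument is more self-contained (it does not cite Howe), while the paper's is quicker given those external formulas.

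One point worth noting: your detour through $\mathcal{O}_{K_f^+}$ and hypothesis \ref{enmax} is unnecessary. The claim you actually need is that $\{1,\alpha\}$ is a $\mathbb{Z}$-basis of $\mathbb{Z}[\alpha]$, and this is immediate from the fact that $\alpha$ satisfies the monic quadratic $f^+(T)\in\mathbb{Z}[T]$; you do not need to know that $\mathbb{Z}[\alpha]$ is the full ring of integers of $K_f^+$. In fact the identification $\mathbb{Z}[\alpha]=\mathcal{O}_{K_f^+}$ is stated separately in the paper as Lemma~\ref{lemmaximalrealorder}, \emph{after} the present lemma, and the paper's proof of Lemma~\ref{lemconductorq} does not invoke \ref{enmax} at all. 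So your argument can be streamlined: once you observe that $\varpi_f$ is quadratic over $\mathbb{Z}[\alpha]$ and $\alpha$ is quadratic over $\mathbb{Z}$, the basis $\{1,\alpha,\varpi_f,\alpha\varpi_f\}$ of $\mathcal{O}_f$ follows directly, and the determinant computation finishes the job.
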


\begin{proof}
Using the above definition of $f$ and Propositions 9.4 and 9.5 of \cite{howe95}, 
$$\Delta_{\calo_f}=\Delta_{f^+}^2 \cdot N_{K_f/\rat}(\varpi_f-\bar\varpi_f)= (a^2-4b+8q)^2(b^2+4bq+4q^2-4a^2q).$$
The discriminant of $\Z[\varpi_f]$ is given by
$$\Delta_{\integ[\varpi_f]}=\Delta_f=q^2(a^2-4b+8q)^2(b^2+4bq+4q^2-4a^2q)=q^2 \Delta_{\calo_f}$$
and $\Delta_f=[\O_f:\integ[\varpi_f]]^2\Delta_{\calo_f}$.
Then the desired index is $q$.
\end{proof}

\begin{corollary}
\label{lemalmostmaximal}
If $\ell\not = p$, then $\calo_{K_f}\tensor \integ_{(\ell)} \iso \integ_{(\ell)}[T]/f(T)$.
\end{corollary}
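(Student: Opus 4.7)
The plan is to leverage Lemma \ref{lemconductorq} directly, together with the localization behavior of finitely generated abelian groups whose torsion is supported at $p$.

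First I would recall that the inclusion $\integ[\varpi_f] \hookrightarrow \calo_f$ sits in a short exact sequence of finitely generated $\integ$-modules
\begin{equation*}
0 \to \integ[\varpi_f] \to \calo_f \to \calo_f/\integ[\varpi_f] \to 0,
\end{equation*}
where by Lemma \ref{lemconductorq} the cokernel has order $q = p^e$, so it is a finite abelian group annihilated by a power of $p$. By the maximality hypothesis \ref{enmax}, $\calo_f = \calo_{K_f}$, so the same sequence expresses $\calo_{K_f}/\integ[\varpi_f]$ as a $p$-power torsion group.

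Next I would tensor the sequence with $\integ_{(\ell)}$ for $\ell \neq p$. Since $\integ_{(\ell)}$ is a flat $\integ$-algebra in which $p$ is a unit, the cokernel $(\calo_{K_f}/\integ[\varpi_f]) \otimes \integ_{(\ell)}$ vanishes. Hence the induced map $\integ[\varpi_f] \otimes \integ_{(\ell)} \to \calo_{K_f} \otimes \integ_{(\ell)}$ is an isomorphism.

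Finally, I would identify the left-hand side: since $\integ[\varpi_f]$ is the quotient $\integ[T]/f(T)$ (as $\varpi_f$ is a root of $f$ and $f$ is the minimal polynomial of $\varpi_f$, because $f$ is irreducible by hypothesis \ref{engal}), the isomorphism $\integ[\varpi_f] \otimes \integ_{(\ell)} \cong \integ_{(\ell)}[T]/f(T)$ follows from base change of the quotient presentation. Combining gives the desired identification $\calo_{K_f} \otimes \integ_{(\ell)} \cong \integ_{(\ell)}[T]/f(T)$. There is no serious obstacle here; the only thing one must be careful about is invoking both the maximality assumption (to replace $\calo_f$ by $\calo_{K_f}$) and the irreducibility of $f$ (to recognize $\integ[\varpi_f]$ as $\integ[T]/f(T)$).
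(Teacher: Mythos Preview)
Your argument is correct and is exactly the natural unpacking of why this is an immediate corollary of Lemma \ref{lemconductorq}; the paper states it without proof, and your exact-sequence/localization argument is the intended justification.
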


Similarly, $\Z[T]/f^+(T)$ is maximal:

\begin{lemma}
\label{lemmaximalrealorder}
The order $\Z[T]/f^+(T)$ is the maximal order $\O_{K_f^+}$.
\end{lemma}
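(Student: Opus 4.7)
My plan is to compare discriminants. Set $\alpha := \varpi_f + \bar\varpi_f$, so $\Z[T]/f^+(T) = \Z[\alpha] \subseteq \O_{K_f^+}$. Since $[\O_{K_f^+}:\Z[\alpha]]^2 \cdot \Delta_{\O_{K_f^+}} = \Delta_{\Z[\alpha]} = \Delta_{f^+}$, it suffices to prove $\Delta_{\O_{K_f^+}} = \Delta_{f^+}$.

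The main tool I would invoke is the discriminant tower formula for $K_f^+ \subset K_f$:
\[
\Delta_{\O_{K_f}} = \Delta_{\O_{K_f^+}}^2 \cdot N_{K_f^+/\Q}(\mathfrak{d}_{K_f/K_f^+}),
\]
where $\mathfrak{d}_{K_f/K_f^+}$ is the relative discriminant ideal. Since $\varpi_f$ satisfies $X^2 - \alpha X + q$ over $\O_{K_f^+}$, the suborder $\O_{K_f^+}[\varpi_f] \subseteq \O_{K_f}$ is free with basis $\{1,\varpi_f\}$ and relative discriminant $(\alpha^2 - 4q)\O_{K_f^+}$, so $\mathfrak{d}_{K_f/K_f^+}$ divides $(\alpha^2-4q)\O_{K_f^+}$. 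A short Galois computation exploiting $\alpha^2 - 4q = (\varpi_f - \bar\varpi_f)^2$ and the commutativity of complex conjugation with $\Gal(K_f/\Q)$ (afforded by (W.3)) identifies $N_{K_f^+/\Q}(\alpha^2-4q)$ with $N_{K_f/\Q}(\varpi_f - \bar\varpi_f)$.

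To close the argument, I would invoke assumption (W.4) to equate $\Delta_{\O_{K_f}}$ with $\Delta_{\O_f} = \Delta_{f^+}^2 \cdot N_{K_f/\Q}(\varpi_f - \bar\varpi_f)$ from the proof of Lemma~\ref{lemconductorq}, and then substitute into the tower formula to obtain
\[
\Delta_{f^+}^2 \cdot N_{K_f^+/\Q}(\alpha^2 - 4q) \;=\; \Delta_{\O_{K_f^+}}^2 \cdot N_{K_f^+/\Q}(\mathfrak{d}_{K_f/K_f^+}).
\]
Since $\Delta_{\O_{K_f^+}}$ divides $\Delta_{f^+}$ (from $\Z[\alpha] \subseteq \O_{K_f^+}$) and $N_{K_f^+/\Q}(\mathfrak{d}_{K_f/K_f^+})$ divides $N_{K_f^+/\Q}(\alpha^2 - 4q)$ (from the previous paragraph), the equality of the two products forces both divisibilities to be equalities. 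In particular $\Delta_{\O_{K_f^+}} = \Delta_{f^+}$, hence $\Z[\alpha] = \O_{K_f^+}$.

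I do not anticipate a substantial obstacle; the essence is a ``divisibility squeeze'' that transfers maximality of $\O_f$ in $K_f$ down to maximality of $\Z[\alpha]$ in $K_f^+$, by playing the tower formula against Howe's explicit expression for $\Delta_{\O_f}$ used in Lemma~\ref{lemconductorq}. The only delicate point is the norm identity $N_{K_f^+/\Q}(\alpha^2-4q) = N_{K_f/\Q}(\varpi_f - \bar\varpi_f)$, which is a bookkeeping matter.
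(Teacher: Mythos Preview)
Your discriminant-squeeze argument is correct. The norm identity you flag as ``delicate'' is in fact immediate: since $\alpha^2-4q=(\varpi_f-\bar\varpi_f)^2$ and $N_{K_f/K_f^+}(\varpi_f-\bar\varpi_f)=-(\varpi_f-\bar\varpi_f)^2$, transitivity of norms gives $N_{K_f/\Q}(\varpi_f-\bar\varpi_f)=N_{K_f^+/\Q}\bigl(-(\varpi_f-\bar\varpi_f)^2\bigr)=N_{K_f^+/\Q}(\alpha^2-4q)$, the sign disappearing because $[K_f^+:\Q]=2$. With that in hand, the product identity $\Delta_{f^+}^2\cdot N_{K_f^+/\Q}(\alpha^2-4q)=\Delta_{\O_{K_f^+}}^2\cdot N_{K_f^+/\Q}(\mathfrak d_{K_f/K_f^+})$ together with the two divisibilities forces equality termwise, exactly as you say.

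The paper, however, takes a much more elementary route that avoids the relative discriminant machinery entirely. It observes that $\O_f\cap K_f^+=\O_{K_f}\cap K_f^+=\O_{K_f^+}$ by \ref{enmax}, and then shows directly that $\O_f\cap K_f^+=\Z[\alpha]$ by writing an arbitrary element of $\O_f=\Z[\varpi_f,\bar\varpi_f]$ in terms of an explicit $\Z$-spanning set and imposing invariance under complex conjugation. This is a two-line linear-algebra check rather than an appeal to the tower formula. Your approach is more conceptual and would generalize more readily to situations without a convenient integral basis, but for the case at hand the paper's bare-hands computation is shorter and uses nothing beyond the definition of $\O_f$ and the relation $\varpi_f\bar\varpi_f=q$.
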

\begin{proof}
Condition \ref{enmax} implies that $\O_f\cap K_f^+=\O_{K_f}\cap K_f^+=\O_{K_f^+}$.  Certainly $\Z[T]/f^+(T)=\Z[\varpi_f+\bar\varpi_f] \subseteq \O_f\cap K_f^+$.
Consider $a\in\O_f$; then $a=a_0+a_1\varpi_f+a_2\bar\varpi_f + a_3\varpi_f\bar\varpi_f$ for some integers $a_i$.  We have $\varpi_f\bar\varpi_f=q$ as $f$ is a $q$-Weil polynomial, and $a\in K_f^+$ if and only if $a_1=a_2$.  Then $a\in\O_f\cap K_f^+$ has the form $a=(a_0+a_3q)+a_1(\varpi_f+\bar\varpi_f)$ and $\O_f\cap K_f^+\subseteq \Z[\varpi_f+\bar\varpi_f]$. Thus, $\Z[\varpi_f+\bar\varpi_f] = \O_{K_f^+}$.
\end{proof}


\section{Conjugacy classes in symplectic groups}

If $X/\ff_q$ is a principally polarized abelian surface, then the
four-dimensional $\ff_\ell$-vector space $X_\ell := X[\ell](\bar\ff_q)$
is naturally equipped with a symplectic form.  We collect some
notation concerning symplectic (similitude) groups. 

\subsection{Symplectic groups}

Let $V$ be a vector space of dimension $2g$ over a field $k$, equipped
with a perfect, skew-symmetric form $\ang{\cdot,\cdot}$.  The
symplectic similitude group of $V$ is the group of automorphisms which
preserve this form up to a multiple.  Concretely,
\[
\gsp(V,\ang{\cdot,\cdot})  = \st{\gamma \in \gl(V): \exists m(\gamma)
  \in k\units: \forall u,v \in V, \ang{\gamma u, \gamma v} = m(\gamma)
  \ang{u,v}}.
\]
The group of automorphisms of the symplectic space is the symplectic
group, $\SP(V,\ang{\cdot,\cdot})$, and these groups sit in an exact sequence
\begin{diagram}[LaTeXeqno]\label{diagsp}
1 & \rto & \SP(V,\ang{\cdot,\cdot}) & \rto & \gsp(V,\ang{\cdot,\cdot})
& \rto^{{\rm mult}} & k\units & \rto & 1 \\
&&&&\gamma & \rmto & m(\gamma).
\end{diagram}
For $m \in k\units$, we let $\gsp(V,\ang{\cdot,\cdot})^{(m)} = 
{\rm mult}\inv(m)$.

Call a decomposition $V = W_1\oplus W_2$ symplectic if, for each $i$,
$\ang{\cdot,\cdot}\rest{W_i}$ is a perfect pairing; and isotropic if,
for some $i$, $\ang{\cdot,\cdot}\rest{W_i} = 0$.

In fact, any symplectic space $V$ of dimension $2g$ is isomorphic to
$k^{\oplus 2g}$, equipped with the pairing described by the $2g\times
2g$ matrix
\[
J = \begin{pmatrix}
0 & \one_g \\
-\one_g & 0
\end{pmatrix};
\]
the associated similitude and symplectic groups are $\gsp_{2g}(k)$ and
$\SP_{2g}(k)$, respectively.

\subsection{Shapes of conjugacy classes}\label{secclassshapes}

In a general linear group $\gl(V)$, semisimple conjugacy classes are
parametrized by the theory of rational canonical form (RCF), which gives a decomposition of any automorphism of a vector space into a direct sum of cyclic automorphisms over invariant subspaces.  (An automorphism is cyclic if and only if its minimal and characteristic polynomials coincide.)  Specifically, we factor the characteristic polynomial of $\gamma$ into a product of irreducible polynomials
$$f_\gamma(T)=\prod_i \left(\phi_i(T)\right)^{\lambda_i}$$
 and then associate to each factor $\phi_i$ a partition of its multiplicity $\lambda_i$.  Denote the partition by $[\lambda_{i,1}, \lambda_{i,2}, \ldots, \lambda_{i,n}]$ where $\lambda_{i,1}\geq \lambda_{i,2}\geq \dots \geq \lambda_{i,n}$.  Then $V$ has a $\gamma$-invariant subspace with characteristic polynomial $\phi_i^{\lambda_{i,j}}$ for each $1\leq j\leq n$, and $\gamma$ restricted to each of these subspaces is cyclic.  Note that the minimal polynomial of $\gamma$ is the product of $\phi_i^{\lambda_{i,1}}$, so $\gamma$ is cyclic if and only if the partition of $\lambda_i$ consists of a single part for each $\phi_i$.
Thus, arbitrary conjugacy
classes in $\gl_{2g}(k)$ are  determined by their characteristic polynomial and
additional partition data.

The classification of conjugacy classes in $\gsp_{2g}(k)$ is
more intricate for two reasons.  First, for elements with repeated eigenvalues, the
presence of the symplectic form places nontrivial restrictions on
allowable partition data.  Second, elements of $\gsp_{2g}(k)$
which are conjugate in $\gl_{2g}(k)$ need not be conjugate in the
symplectic similitude group; certain $\gl_{2g}$-conjugacy classes
decompose into classes indexed by $k\units/(k\units)^2$.  
(For details of this decomposition, see, for example, \cite{fulman00} or \cite{wall}.  Alternatively, compare our results to those of \cite{breeding} or \cite{shinoda82}.)

In the sequel, we will only need the case where $g=2$ and $k
=\ff_\ell$ is a finite field. 
Let $\calc(\gamma)$ denote the conjugacy class of $\gamma$.  We  distinguish 
conjugacy classes by the factorization pattern of their characteristic polynomials $f_\gamma(T)$ into irreducible polynomials over $\ff_\ell$, and then refine this with additional combinatorial data, if necessary.  The resulting collection of data associated to $\calc(\gamma)$ will be called the {\em shape} of $\gamma$, or of its conjugacy class.

Our enumeration of conjugacy classes in $\gsp_4(\ff_\ell)$ is purposefully incomplete; we only include those which arise in our subsequent study of abelian surfaces.  First, we only consider those classes for which all irreducible factors of the characteristic polynomial have the same degree.  (Briefly call such a class "relevant".)  Second, we only list those conjugacy classes corresponding to regular, or cyclic, elements. In general, an element of an algebraic group $\gamma \in G(k)$ is called regular if the dimension of its centralizer, $\dim \cent_G(\gamma)$, is minimal, i.e., equal to the rank of $G$.  In the case of $G = \gsp_4$, it is equivalent to insist that $\gamma$ be cyclic in the standard representation, i.e., that there exists $v\in V$ such that $\st{\gamma^iv : i\ge 0}$ spans $V$.  
  As usual, a semisimple element is regular if and only if its eigenvalues are distinct.

Let $f_\gamma(T) = \prod_j g_j(T)^{e_j}$ be the factorization of $f_\gamma(T)$ into powers of distinct, irreducible monic polynomials of equal degree.  To this factorization of $f_\gamma(T)$ there is an associated factorization $V \iso \oplus W_j$, where $\gamma\rest{W_j}$ has characteristic polynomial $g_j(T)^{e_j}$.  For each $j$, either $\ang{\cdot,\cdot}\rest{W_j}$ is zero or it is perfect; call these factorizations isotropic and symplectic, respectively.

\noindent\textbf{Case 1: Regular semisimple elements}


A regular semisimple conjugacy class is one for which the elements have a squarefree characteristic polynomial.  
We classify such conjugacy classes by the factorization of $f_\gamma(T)$ (over $\ff_\ell$) and by $m(\gamma)$; let $a_i\in\F_\ell$ be distinct and $g_1\neq g_2$.  Then Table \ref{tabreg} is a complete classification of relevant regular semisimple conjugacy class shapes.  
(In each of these cases, all $e_j=1$ and so we omit the trivial partition data from RCF.)


\begin{table}[h]
\caption{\label{tabreg}Regular semisimple conjugacy class shapes}
\[\begin{array}{||l||c|ll||}
\hline\hline
\text{Class shape} & f_\gamma(T) & m(\gamma) & \\ 
\hline 
\split & \prod_{j=1}^4 (T-a_j) & m=a_1a_3=a_2a_4 &  \\ 
\dqs & g_1(T)g_2(T) & m=g_j(0) & \text{(symplectic)}  \\ 
\dqi & g_1(T)g_2(T) & m\neq g_j(0), m^2=g_1(0)g_2(0) & \text{(isotropic)}  \\ 
\quartic & g(T) & m^2=g(0) & \\\hline \hline
\end{array}\]
\end{table}

\noindent\textbf{Case 2: Non-semisimple elements} 

As stated above, if $f_\gamma(T)$ is not squarefree then $\gamma$ is
cyclic if and only if all of the associated partitions are maximal
(consist of a single part).  In fact, such a conjugacy class is
determined by a signed partition -- the sign corresponds to a choice
of coset in $\ff_\ell\units/(\ff_\ell\units)^2$ as discussed above --  and Table \ref{tabnonreg} completes our list of relevant cyclic conjugacy class shapes.  (As in Table \ref{tabreg}, the $a_i\in\F_\ell$ are distinct.)


\begin{table}[h]
\caption{\label{tabnonreg}Non-semisimple cyclic conjugacy class shapes}
\[\begin{array}{||l||c|ll|c||}
\hline\hline
\text{Class shape} & f_\gamma(T) & m(\gamma) & 
& \text{Partition} \\
\hline 
\qrl & (T-a)^4 & m=a^2 & & [4] \\ 
\drls & (T-a)^2(T+a)^2 & m=a^2 & \text{(symplectic)} & \set{[2],[2]}\pm \\ 
\drli & (T-a_1)^2(T-a_2)^2 & m=a_1a_2 & \text{(isotropic)} & [2] \\  
\textbf{RQ-1} & [g(T)]^2 & m=g(0) &  & [2]  \\ 
\textbf{RQ-2} & (T^2-m)^2  & m \neq \Box &  & [2]\pm \\ \hline\hline

\end{array}\]
\end{table}

\noindent (For the conjugacy class shape \drli, the partition [2] corresponds to the factor $(T-a_1)(T-a_2)$ in $f_\gamma(T)$, and thus to a subspace with characteristic polynomial $(T-a_1)(T-a_2)$.)

Note that, for a fixed characteristic polynomial $f$ of shape \drls or
\textbf{RQ-2}, the set of cyclic elements with characteristic polynomial $f$
forms {\em two} conjugacy classes.  For example, for a nonsquare $x\in\F_\ell$, 
$$\gamma_1=\begin{pmatrix}
a&&1&\\
&-a&&1\\
&&a&\\
&&&-a
\end{pmatrix} 
\ \ \text{ and } \ \ 
\gamma_2=\begin{pmatrix}
a&&1&\\
&-a&&x\\
&&a&\\
&&&-a
\end{pmatrix}$$
are both elements of shape \drls.  (Verification of this fact is discussed in the proof of Lemma \ref{GSpnonregularcent}.) The matrix 
$$Z=\begin{pmatrix}
z_1&&z_3&\\
&z_2&&z_4\\
&&z_1&\\
&&&z_2x
\end{pmatrix}$$
conjugates $\gamma_1$ to $\gamma_2$ over $\GL_4(\F_\ell)$, but is an element of $\GSp_4(\F_\ell)$ if and only if $z_1^2=z_2^2 x$. Since $x$ is nonsquare, $\gamma_1$ and $\gamma_2$ are not conjugate in $\GSp_4(\F_\ell)$, although they {\em are} conjugate in $\GSp_4(\F_{\ell^2})$.   (A similar argument shows that we also have two classes of shape \textbf{RQ-2}.)

\subsection{Centralizer orders}

We determine the size of each of the conjugacy classes $\cclass(\gamma)$ listed in Tables \ref{tabreg} and \ref{tabnonreg} by computing the order of the centralizer of the representative $\gamma$.  Let $\cent_{\GSp_4(\F_\ell)}(\gamma)$ denote the centralizer of $\gamma$ in $\GSp_4(\F_\ell)$.

A representative of a regular semisimple conjugacy class is an element of a unique maximal torus of $\GSp_4(\F_\ell)$, and the centralizer of such a $\gamma$ is that maximal torus \cite{carter}.  We use the structure of these maximal tori to compute the sizes of the centralizers of the regular semisimple class shapes.

\begin{lemma} \label{GSptoruscent}
Let $\cclass(\gamma)$ have one of the conjugacy class shapes listed in Table \ref{tabreg}.  Then
$$\#\cent_{\GSp_4(\F_\ell)} (\gamma) = \begin{cases}
(\ell-1)^3 & \textup{ if $\cclass(\gamma)$ is \split}, \\
(\ell+1)^2(\ell-1) & \textup{ if  $\cclass(\gamma)$ is \dqs}, \\
(\ell+1)(\ell-1)^2 & \textup{ if  $\cclass(\gamma)$ is \dqi}, \\
(\ell^2+1)(\ell-1) & \textup{ if $\cclass(\gamma)$ is \quartic}.
\end{cases}$$
\end{lemma}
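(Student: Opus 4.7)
The plan is to exploit the fact, cited in the paragraph preceding the lemma, that for a regular semisimple $\gamma$ the centralizer $\cent_{\GSp_4(\F_\ell)}(\gamma)$ coincides with the unique maximal torus containing $\gamma$. For each of the four shapes I would choose a convenient symplectic decomposition of $V=\F_\ell^4$ into $\gamma$-stable pieces, identify the corresponding maximal torus $T$ explicitly in those coordinates, and count $\#T(\F_\ell)$ by fibering over the multiplier map $m\colon \GSp_4\to \mathbb G_m$.

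For the \split case I would diagonalize $\gamma=\diag(a_1,a_2,a_3,a_4)$ in a basis $(e_1,e_2,f_1,f_2)$ of $V$ for which the symplectic form is the standard $J$; the similitude condition $a_1a_3=a_2a_4=m$ built into the table exactly says this diagonal element lies in $\GSp_4$. Its centralizer is the split torus $\{\diag(d_1,d_2,d_3,d_4):d_1d_3=d_2d_4\}$, a 3-dimensional subtorus of $(\gm)^4$, of order $(\ell-1)^3$. For \dqs the space decomposes symplectically as $V=W_1\oplus W_2$ with $\gamma|_{W_j}$ having irreducible quadratic characteristic polynomial $g_j$; then $\mathrm{Cent}_{\GL(W_j)}(\gamma|_{W_j})\cong\F_{\ell^2}^\times$, and the similitude constraint is $\det(t_1)=\det(t_2)$, i.e.\ the norms $N_{\F_{\ell^2}/\F_\ell}(t_j)$ must agree. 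Surjectivity of the norm then gives
\[
\#T=\frac{(\ell^2-1)^2}{\ell-1}=(\ell+1)^2(\ell-1).
\]
For \dqi the decomposition $V=W_1\oplus W_2$ is isotropic, so the form identifies $W_2\cong W_1^*$; each $t_j\in\F_{\ell^2}^\times$ commutes with $\gamma|_{W_j}$, and the similitude condition reads $t_2\, t_1^\sigma\in\F_\ell^\times$, where $\sigma$ is the nontrivial element of $\Gal(\F_{\ell^2}/\F_\ell)$. Picking $t_1$ freely in $\F_{\ell^2}^\times$ and then $t_2$ in the coset $\F_\ell^\times\cdot(t_1^\sigma)^{-1}$ gives $(\ell^2-1)(\ell-1)=(\ell+1)(\ell-1)^2$. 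For \quartic I would identify $V=\F_{\ell^4}$ as an $\F_\ell$-vector space with $\gamma$ multiplication by a root $\alpha$ of $g$, and realize the symplectic form as $\langle u,v\rangle=\mathrm{Tr}_{\F_{\ell^4}/\F_\ell}(\lambda u\bar v)$ for some $\lambda$, with $\bar{\phantom{x}}$ the generator of $\Gal(\F_{\ell^4}/\F_{\ell^2})$. The similitude condition then becomes $t\bar t\in\F_\ell^\times$, so $T=N^{-1}(\F_\ell^\times)$ for $N\colon\F_{\ell^4}^\times\to\F_{\ell^2}^\times$ the relative norm; since $|\ker N|=(\ell^4-1)/(\ell^2-1)=\ell^2+1$, we get $(\ell^2+1)(\ell-1)$.

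The main obstacle I anticipate is bookkeeping for the \dqi and \quartic cases, specifically justifying the claimed shape of the symplectic form in the chosen $\gamma$-eigenbasis. Concretely, one must show that on an irreducible $\gamma$-stable subspace whose characteristic polynomial is irreducible of degree $2d$, any $\gamma$-compatible pairing (skew or zero) is forced to be of the Galois-trace form written above, up to scalar; this follows from Schur's lemma, since the space of bilinear pairings intertwining $\gamma$ with $m\gamma^{-*}$ is one-dimensional over the appropriate residue field. Once the form is pinned down, the counts reduce to surjectivity of norm maps between finite fields, which is standard.
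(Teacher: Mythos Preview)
Your proposal is correct and follows essentially the same approach as the paper: both arguments invoke the fact that the centralizer of a regular semisimple element is its maximal torus, and then compute the order of that torus by counting the elements whose relevant norm lands in $\F_\ell^\times$. The paper only spells out the \quartic case (arriving at the same norm condition $N_{\F_{\ell^4}/\F_{\ell^2}}(t)\in\F_\ell^\times$) and declares the remaining three analogous, whereas you carry out all four explicitly; your added Schur-lemma justification for the shape of the pairing is extra detail the paper omits.
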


\begin{proof}
In each case, we determine the size of the appropriate torus.  For example, if $\cclass(\gamma)$ is of shape \quartic, the polynomial $f_\gamma(T)$ has roots $t, t^\ell, t^{\ell^2}$, and $t^{\ell^3}$ in $\F_{\ell^4}^\times$ in one orbit under the action of Galois.  Two pairs of roots have product $m(\gamma)\in \F_\ell^\times$ since $\gamma\in\GSp_4(\F_\ell)$.  
The element $t t^{\ell}$ cannot lie in $\F_\ell^\times$ when $t\in\F_{\ell^4}\smallsetminus \F_{\ell^2}$, thus $t t^{\ell^2}=m(\gamma)$.  The map $t \mapsto t t^{\ell^2}$ is the norm map of $\F_{\ell^4}$ over $\F_{\ell^2}$.  There are $\frac{\ell^4 -1}{\ell^2 -1} \cdot (\ell-1) = (\ell^2 +1)(\ell-1)$ elements of $\F_{\ell^4}^\times$ whose $\F_{\ell^2}$-norm lies in $\F_\ell^\times$, which is  the size of the torus and thus the centralizer.  The other centralizer orders are computed analogously.
\end{proof}

Determining the centralizer orders of the non-semisimple class shapes requires more effort.

\begin{lemma} \label{GSpnonregularcent}
Suppose $\cclass(\gamma)$ has one of the conjugacy class shapes listed in Table \ref{tabnonreg}.  Then 
$$\#\cent_{\GSp_4(\F_\ell)} (\gamma) = \begin{cases}
\ell^2(\ell-1) & \textup{ if $\cclass(\gamma)$ is \qrl}, \\
2\ell^2(\ell-1) & \textup{ if $\cclass(\gamma)$ is \drls}, \\
\ell(\ell-1)^2 & \textup{ if $\cclass(\gamma)$ is \drli}, \\
\ell(\ell^2-1) & \textup{ if $\cclass(\gamma)$ is \textbf{RQ-1}}, \\
2\ell^2(\ell-1) & \textup{ if $\cclass(\gamma)$ is \textbf{RQ-2}}.
\end{cases}$$
\end{lemma}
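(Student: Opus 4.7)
The plan is to handle each conjugacy class shape of Table~\ref{tabnonreg} by choosing an explicit representative $\gamma$, decomposing $V = \F_\ell^4$ according to the factorization of $f_\gamma$ into irreducible powers, and then imposing the $\gsp_4$-similitude condition on top of the $\GL_4$-centralizer. Since each class in question is regular (cyclic on every indecomposable $\F_\ell[\gamma]$-summand), the $\GL_4$-centralizer restricted to each summand $W$ is the full commutative algebra $\F_\ell[\gamma|_W]$. What varies across the cases is the structure of the symplectic form on the decomposition: either $V$ splits as an orthogonal sum of two $\gamma$-stable subspaces (\drli, \drls) or $V$ is $\gamma$-indecomposable and the pairing induces an algebra involution on all of $\F_\ell[\gamma]$ (\qrl, \textbf{RQ-1}, \textbf{RQ-2}).

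For \drli and \drls, write $V = W_1 \oplus W_2$ for the generalized eigenspace decomposition. Because the two blocks have distinct eigenvalues, every centralizing element preserves this decomposition, so it takes the form $g = (g_1, g_2)$ with $g_j \in Z_{\GL(W_j)}(\gamma|_{W_j})$, the centralizer of a single $2 \times 2$ Jordan block, of order $\ell(\ell-1)$. In the isotropic case \drli, the pairing identifies $W_2$ canonically with $W_1^\vee$, and the similitude condition determines $g_2$ from $g_1$ and the multiplier $m(g) \in \F_\ell^\times$, giving $\ell(\ell-1)\cdot(\ell-1) = \ell(\ell-1)^2$. In the symplectic case \drls, each $W_j$ carries its own nondegenerate form, and the similitude condition becomes $\det g_1 = \det g_2 = m(g)$; the determinant map $Z_{\GL(W_j)}(\gamma|_{W_j}) \to \F_\ell^\times$, $sI+tN \mapsto s^2$, has image $(\F_\ell^\times)^2$ with fiber size $2\ell$, so the count is $\tfrac{\ell-1}{2}\cdot(2\ell)^2 = 2\ell^2(\ell-1)$.

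For the cyclic shapes \qrl, \textbf{RQ-1}, and \textbf{RQ-2}, let $A = \F_\ell[\gamma]$ and introduce the involution $*: A \to A$ defined by $\langle gu, v\rangle = \langle u, g^* v\rangle$, which satisfies $\gamma^* = m(\gamma)\gamma^{-1}$. An element $g \in A^\times$ lies in $\cent_{\gsp_4(\F_\ell)}(\gamma)$ precisely when $g^*g$ is a scalar in $\F_\ell^\times \cdot 1_A$, so the count is $|N^{-1}(\F_\ell^\times)|$ for the norm $N(g) := g^*g$. For \qrl, $A = \F_\ell[N]/(N^4)$ with $N = \gamma - aI$; a direct expansion of $\gamma^* = a^2\gamma^{-1}$ gives $N^* = -N + a^{-1}N^2 - a^{-2}N^3$, and the scalar condition on $g = c_0 + c_1N + c_2N^2 + c_3N^3$ reduces to a single linear relation determining $c_2$ in terms of $c_0, c_1$, with $c_0 \in \F_\ell^\times$ and $c_1, c_3 \in \F_\ell$ free, yielding $\ell^2(\ell-1)$. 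For \textbf{RQ-1}, $A$ is local with residue field $\F_{\ell^2}$, and $*$ acts as Frobenius there (since $m(\gamma) = g(0)$ is the norm of a root of $g$); the fixed subring is $A^* \cong \F_\ell[\epsilon]/(\epsilon^2)$, the norm is surjective onto $(A^*)^\times$ with kernel of order $\ell(\ell+1)$, and $|N^{-1}(\F_\ell^\times)| = (\ell-1)\cdot\ell(\ell+1) = \ell(\ell^2-1)$. For \textbf{RQ-2}, the multiplier $m(\gamma) = m$ is \emph{not} the norm of a root of $T^2-m$, so $*$ acts trivially on the residue field and $A^* \cong \F_{\ell^2}$; writing $A = A^*[\eta]/(\eta^2)$ with $\eta^* = -\eta$ gives $N(a + b\eta) = a^2$, and requiring $a^2 \in \F_\ell^\times$ for $a = a_0 + a_1\gamma' \in A^*$ (with $(\gamma')^2 = m$) forces $a_0 a_1 = 0$, leaving $2(\ell-1)$ valid $a$'s and $\ell^2$ free $b$'s, for a total of $2\ell^2(\ell-1)$.

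The main technical subtlety is the comparison between \textbf{RQ-1} and \textbf{RQ-2}: although the underlying $\F_\ell$-algebra $A$ has the same abstract structure in both cases, the similitude multiplier dictates how $*$ acts on the residue field $\F_{\ell^2}$, and this changes $A^*$ from the split algebra $\F_\ell[\epsilon]/(\epsilon^2)$ to the field $\F_{\ell^2}$. This is the source of the factor-of-$(\ell+1)$ discrepancy between the two centralizer orders, and it also explains at an algebraic level why shape \textbf{RQ-2} splits into two $\gsp_4(\F_\ell)$-classes over a single $\GL_4(\F_\ell)$-class, as recorded in the discussion preceding the statement.
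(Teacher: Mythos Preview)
Your argument is correct and takes a genuinely different route from the paper's. The paper proceeds by brute force: for each shape it writes down an explicit matrix representative in $\GSp_4(\F_\ell)$, then solves for the generic centralizing matrix entry by entry (the \drls case is worked out as a sample, with the rest declared analogous). You instead exploit that a cyclic $\gamma$ has $\GL_4$-centralizer equal to the commutative algebra $A=\F_\ell[\gamma]$, observe that the symplectic adjoint restricts to an algebra involution $*$ on $A$ with $\gamma^*=m(\gamma)\gamma^{-1}$, and reduce the similitude condition to the norm equation $g^*g\in\F_\ell^\times$. This is more structural: it explains, rather than merely verifies, why \textbf{RQ-1} and \textbf{RQ-2} differ (the action of $*$ on the residue field $\F_{\ell^2}$ is Frobenius in one case and trivial in the other, because $m(\gamma)$ is or is not the norm of a root of $g$), and it makes the \drls versus \drli dichotomy transparent via the Lagrangian/orthogonal nature of the generalized-eigenspace decomposition. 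The paper's approach, by contrast, is more elementary and self-contained---no familiarity with algebras-with-involution is needed, and one can in principle check each line by hand---at the cost of obscuring the pattern behind the answers. Both arguments implicitly assume $\ell$ is odd (yours when solving $2c_0c_2=c_1^2-c_0c_1a^{-1}$ in the \qrl case and when invoking $a_0a_1=0$ for \textbf{RQ-2}; the paper's when distinguishing $\pm$ classes), which is harmless in the paper's setting.
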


\begin{proof}
For each conjugacy class shape, find an explicit cyclic representative $\gamma\in\GSp_4(\F_\ell)$ such that $f_\gamma$ and $m(\gamma)$ are as given in Table \ref{tabnonreg}.  Then find a generic member $C$ of the centralizer of $\gamma$ and use it to find the size of $\cent_{\GSp_4(\F_\ell)} (\gamma) $.  (For the \drls and \textbf{RQ-2} shapes, two distinct non-conjugate representatives are needed for the $+$ and $-$ classes.)

%

As an example, reconsider our previous example where $\cclass(\gamma)$ is \drls.   It is easy to verify that the representatives $\gamma_1$ and $\gamma_2$ given earlier are cyclic elements of $\GSp_4(\F_\ell)$ with characteristic polynomial $(T-a)^2(T+a)^2$ and  $m(\gamma)=a^2$.   The matrix
$$C=\begin{pmatrix}
c_1&&c_3&\\
&c_2&&c_4\\
&&c_1&\\
&&&c_2
\end{pmatrix}$$
centralizes both $\gamma_1$ and $\gamma_2$ with the conditions that $c_1\in\F_\ell^\times$, $c_2=\pm c_1$, 
and $c_3,c_4\in\F_\ell$.  Then each class has a centralizer of order $2\ell^2(\ell-1)$.

The rest of the computations are similar and are omitted here.
\end{proof}

\section{Local factors for $f$}
Given $f$, we will define terms $\nu_\ell(f)$ for each finite prime of
$\ell$, as well as an archimedean term $\nu_\infty(f)$.
For finite primes $\ell\not = p$, we let $\nu_\ell(f)$ be the
probability that a random element of $\gsp_4(\F_\ell)^{(q)}$ has
characteristic polynomial $f$, and compare this probability to the
corresponding probability for a ``typical'' polynomial.  (This is a
higher-dimensional analogue of Gekeler's philosophy and concomitant
definition for elliptic curves \cite[Sec.\ 3]{gekeler03}; see Section
\ref{subsecgekeler} for a brief discussion of why, in the presence of
hypothesis \ref{enmax}, it suffices to work with a simpler definition
than that of \cite{gekeler03}.)
The definitions of $\nu_p(f)$ and
$\nu_\infty(f)$ are more intricate, but guided by a similar
philosophy.

\subsection{$\nu_\ell(f)$}

First, suppose $\ell\not = p$ is a finite rational prime. The
Frobenius endomorphism $\varpi_{X/\ff_q}$ of a principally polarized  abelian variety
$X/\ff_q$ acts an automorphism of $X_\ell$.  The polarization induces
a symplectic pairing on $X_\ell$; $\varpi_{X/\ff_q}$ scales this
pairing by a factor of $q$, and we may think of $\varpi_{X/\ff_q}$ as
an element of $\gsp(X_\ell)^{(q)}
\iso \gsp_4(\ff_\ell)^{(q)}$ (recall the notation surrounding
\eqref{diagsp}).

(Briefly) setting aside abelian varieties, there are 
$\ell^2$ polynomials which
occur as characteristic polynomials of elements of
$\gsp_4(\ff_\ell)^{(q)}$.    The average frequency (over all such polynomials)
 with which a given polynomial occurs as the characteristic
polynomial of an element of $\gsp_4(\ff_\ell)^{(q)}$ is
$\#\gsp_4(\ff_\ell)^{(q)}/\ell^2$. 

Consequently, at least for $\ell$ unramified in $K_f$,  we measure the departure of
the frequency of occurrence of $f$ from the average such frequency by
by
\begin{equation}
\label{eqdefnuell}
\nu_\ell(f) =
\frac{\#\st{ \gamma \in \gsp_4(\ff_\ell)^{(q)} : f_\gamma \equiv f \bmod
    \ell}}{\#\gsp_4(\ff_\ell)^{(q)}/\ell^2}.
\end{equation}
(An extension of this definition to all $\ell\not = p$ is given below
in \eqref{eqredefnuell}.)

\subsection{$\nu_p(f)$}

By way of motivation, suppose that $X/\ff_q$ is an ordinary abelian
surface, with characteristic polynomial of Frobenius $f_{X/\ff_q}(T) =
T^4 - a_X T^3+b_XT^2 - qa_XT + q^2$.  Since $X$ is ordinary (and
$\ff_q$ is perfect), there is
a canonical decomposition $X[p] \iso X[p]^\et \oplus X[p]^\tor$ of the
$p$-torsion group scheme into \'etale and toric components.  Note that
$X_p := X[p](\bar\ff_q)$ is actually $X[p]^\et(\bar\ff_q) \iso
(\integ/p)^2$.  The $\ff_q$-rational structure of $X[p]^\et$ is
captured by the action of the $q$-power Frobenius on $X_p$.  In fact,
$\varpi_{X/\ff_q}$ acts invertibly on $X_p$, with characteristic
polynomial $g_{X/\ff_q}(T) := T^2 - a_XT+b_X \bmod p$.

Now, $X[p]^\tor$ is connected, and specifically
$X[p]^\tor(\bar\ff_q)$ is a single point; but its Cartier dual is
\'etale.  In particular $(X[p]^\tor)^*(\bar\ff_q) \iso (\integ/p)^2$,
and the action of Frobenius on this Galois module (again) has
characteristic polynomial $g_{X/\ff_q}(T)$.  

Finally, recall that the Frobenius operator must preserve the canonical
decomposition of $X[p]$ into its \'etale and toric parts.

Because of these considerations, we set
\[
\nu_p(f) = \frac{\#\st{\gamma \in \gsp_4(\ff_p)^{(b^2)} :
    f_\gamma \equiv (T^2-aT+b)^2 \bmod p\text{ and
    }\gamma\text{ semisimple}}}
{\#\gsp_4(\ff_p)^{(b^2)}/p^2}.
\]

\subsection{$\nu_\infty(f)$}

It remains to define an archimedean term; our choice comes from the
Sato-Tate measure, which (conjecturally) explains the distribution of
Frobenius elements of abelian surfaces.

Recall that semisimple conjugacy classes in the compact group ${\rm USp}_4$
are parametrized by (``Frobenius angles'') $0 \le \theta_1 \le
\theta_2\le\pi$.  The Sato-Tate measure on the space of Frobenius angles is
simply the pushforward of Haar measure.  Explicitly, the Weyl
integration formula \cite[p218,7.8B]{weyl} shows that this measure is
\[
\mu_\satotate(\theta_1,\theta_2) =
\frac{16}{\pi^2}(\cos(\theta_2)-\cos(\theta_1))^2 \sin^2(\theta_1)
\sin^2(\theta_2)\, d\theta_1\, d\theta_2.
\]
Once $q$ is fixed, a pair of angles $\st{\theta_1,\theta_2}$ gives
rise to a $q$-Weil polynomial
\[
\prod_{j = 1,2} (T - \sqrt q\exp(i\theta_j))(T - \sqrt q
\exp(-i\theta_j));
\]
the induced measure on the space of $q$-Weil polynomials is
\[
\mu_\satotate(a,b) = \oneover{4q^3\pi^2}
\sqrt{(a^2-4b+8q)(b^2+4bq+4q^2-4a^2q)}\, da\, db.
\]
Note that, since there are approximately $q^{\dim \stk A_2} = q^3$
principally polarized abelian surfaces over $\ff_q$,
abelian varieties, $q^3 \mu_\satotate(a,b)$ is a sort of archimedean
prediction for $\#\stk A_2(\ff_q;f)$.  Guided by this and the
calculations of Lemma \ref{lemconductorq}, 
we set
\begin{equation}
\label{eqdefnuinfty}
\nu_\infty(f) = \oneover{\cond(f) \, 4\pi^2}
\sqrt{\abs{\frac{\Delta_f}{\Delta_{f^+}}}}.
\end{equation}

\section{The shape of Frobenius}
\label{secfrob}

Fix a $q$-Weil polynomial satisfying conditions \ref{enord}-\ref{enmax}.  To ease notation slightly, we will write $K$ for $K_f$ and, given assumption \ref{enmax}, write $\calo_K$ for $\calo_f$.  Suppose $X/\ff_q$ is a principally polarized abelian variety such that $\calo_K \subseteq \End(X)$; we choose the polarization so that the Rosati involution on $\End(X)$ induces complex conjugation on $\calo_K$.  On $\ell$-torsion, the principal polarization induces a symplectic pairing on $X_\ell$; complex conjugation on $\calo_K\tensor \ff_\ell$ is adjoint with respect to this pairing; and we obtain $\rho_\ell(\varpi_f) \in \gsp(X_\ell)$.  Our goal in the present section is to relate the shape of $\rho_\ell(\varpi_f)$ (in the sense of Section \ref{secclassshapes}) to the structure of $f(T) \bmod \ell$.

Of course, all of this can be formulated without recourse to abelian varieties.  Let $\kappa(\ell) = \calo_K\tensor\ff_\ell$; it is a four-dimensional vector space over $\ff_\ell$.  Choose a symplectic pairing $\ang{\cdot,\cdot}$ on $\kappa(\ell)$ for which complex conjugation on $\calo_K\tensor\ff_\ell$ is the adjoint with respect to $\ang{\cdot,\cdot}$.  (If $\ell\nmid \Delta_K$, one may explicitly construct such a pairing as follows.  Choose $\alpha \in \calo_K$ relatively prime to $\ell$ such that $\bar\alpha = -\alpha$. Then the reduction modulo $\ell$ of the pairing
\begin{diagram}
\calo_K \cross \calo_K & \rto  & \integ \\
(x,y) & \rmto & {\rm tr}_{K/\rat}(\alpha x\bar y)
\end{diagram}
is a suitable form.)  Such a form is canonically defined up to scaling, and in particular its group of symplectic similitudes is independent of the choice of form.

Then $\varpi_f$ acts on $\kappa(\ell)$.  Let $\gamma_\ell$ be the image of $\varpi_f$ in $\gsp(\kappa(\ell))$; our goal is to use the splitting behavior of $f(T)\bmod \ell$ to compute the {\em cyclic shape} of $\gamma_\ell$, i.e., the shape of any cyclic element whose semisimplification is conjugate to $\gamma_\ell$.

In fact, we define
\begin{equation}
\label{eqredefnuell}
\nu_\ell(f) = \frac{
\#\st{
\gamma \in \gsp_{4}(\ff_\ell): \gamma\text{ is cyclic, with semisimplification } \gamma_\ell}}
{\#\gsp_4(\ff_\ell)^{(q)}/\ell^2}.
\end{equation}

\begin{lemma}
If $\ell\nmid p\Delta_K$, then definitions \eqref{eqdefnuell} and \eqref{eqredefnuell} coincide.
\end{lemma}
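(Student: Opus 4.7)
The plan is to show that when $\ell \nmid p\Delta_K$, both numerators in \eqref{eqdefnuell} and \eqref{eqredefnuell} count exactly the $\gsp_4(\ff_\ell)$-conjugacy class of $\gamma_\ell$; the denominators already agree.

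First I would verify that $f(T)\bmod\ell$ is separable. By Lemma \ref{lemconductorq} and assumption \ref{enmax}, $\Delta_f = q^2\Delta_{\calo_K} = q^2\Delta_K$. Therefore $\ell\nmid p\Delta_K$ forces $\ell\nmid\Delta_f$, so the reduction $f\bmod\ell$ is separable. Next I would identify the characteristic polynomial of $\gamma_\ell$: by Corollary \ref{lemalmostmaximal} there is an isomorphism $\kappa(\ell)=\calo_K\otimes\ff_\ell\cong\ff_\ell[T]/f(T)$, under which $\varpi_f$ acts by multiplication by $T$. Thus $f_{\gamma_\ell}(T)\equiv f(T)\bmod\ell$, and $\gamma_\ell$ is regular semisimple. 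Moreover, since $\varpi_f\bar\varpi_f=q$ and complex conjugation on $\calo_K\otimes\ff_\ell$ is the adjoint for $\ang{\cdot,\cdot}$, we get $m(\gamma_\ell)\equiv q\bmod\ell$, so $\gamma_\ell\in\gsp_4(\ff_\ell)^{(q)}$.

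Now I would show the two sets being counted coincide with the single $\gsp_4(\ff_\ell)$-conjugacy class $\cclass(\gamma_\ell)$. For \eqref{eqredefnuell}: because $\gamma_\ell$ is regular semisimple, it is its own semisimplification and is cyclic, and any element cyclic with semisimplification conjugate to $\gamma_\ell$ must itself be semisimple, hence must lie in $\cclass(\gamma_\ell)$. For \eqref{eqdefnuell}: any $\gamma\in\gsp_4(\ff_\ell)^{(q)}$ with $f_\gamma\equiv f\bmod\ell$ is automatically regular semisimple, so the shape of $\cclass(\gamma)$ is one of the four rows of Table \ref{tabreg}, determined by the factorization type of $f\bmod\ell$. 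Inspecting that table, in each of the shapes \split, \dqs, \dqi, \quartic, the pair $(f_\gamma,m(\gamma))$ pins down the $\gsp_4(\ff_\ell)$-conjugacy class uniquely (no $\ff_\ell\units/(\ff_\ell\units)^2$ splitting occurs among regular semisimple classes). Since $\gamma_\ell$ itself has characteristic polynomial $f\bmod\ell$ and multiplier $q$, every such $\gamma$ is conjugate to $\gamma_\ell$.

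Combining these gives
\[
\#\{\gamma\in\gsp_4(\ff_\ell)^{(q)}:f_\gamma\equiv f\bmod\ell\}
=\#\cclass(\gamma_\ell)
=\#\{\gamma\in\gsp_4(\ff_\ell):\gamma\text{ cyclic, semisimplification }\gamma_\ell\},
\]
and the two definitions of $\nu_\ell(f)$ coincide. The main point to watch is the last step of the previous paragraph, namely confirming from the centralizer computations underlying Table \ref{tabreg} that no regular semisimple class in $\gsp_4(\ff_\ell)$ splits once $f_\gamma$ and $m(\gamma)$ are both fixed; everything else is essentially bookkeeping.
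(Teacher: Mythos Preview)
Your argument is correct and follows essentially the same route as the paper: deduce $\ell\nmid\Delta_f$ so that $f\bmod\ell$ is separable and $\gamma_\ell$ is regular semisimple, then use Table~\ref{tabreg} to see that for such elements the pair $(f_\gamma,m(\gamma))$ (with $m(\gamma)=q$) determines a single $\gsp_4(\ff_\ell)$-conjugacy class. The paper phrases the last step as a short case split (irreducible or fully split versus a product of two quadratics, where the multiplier separates \dqs from \dqi), and then notes that regular semisimple elements are cyclic, but this is exactly your ``$(f_\gamma,m(\gamma))$ pins down the class'' observation.
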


\begin{proof}
If $\ell\nmid p\Delta_K$, then $\ell\nmid \Delta_f$.  Therefore, $f(T)\bmod \ell$ has distinct roots, and $\gamma_\ell$ is regular semisimple.  The classification in Table \ref{tabreg} shows that if $f(T)\bmod \ell$ is either irreducible or a product of linear factors, then any element with characteristic polynomial $f(T)\bmod \ell$ is actually conjugate to $\gamma_\ell$.  If $f(T)\bmod \ell$ is a product of distinct irreducible quadratic polynomials, then the possible shapes of $\gamma_\ell$ are distinguished by their multiplier; but one knows that the multiplier of $\gamma_\ell$ is $q$.  The claim now follows once one recalls that a regular semisimple element is cyclic.
\end{proof}

Note that, tautologically, the characteristic polynomial of $\gamma_\ell$ is exactly the reduction of $f(T)$.  If, for instance, $f(T) \bmod \ell$ is irreducible, then a moment's reflection (or a glance at Tables \ref{tabreg} and \ref{tabnonreg}) reveals that $\gamma_\ell$ is \quartic.

However, it sometimes happens (e.g., with \dqs and \dqi) that the factorization pattern of $f$ alone does not determine the shape of $\gamma$.

Since $\kappa(\ell) = \calo_K/\ell \iso \ff_\ell[T]/f(T)$ (Corollary \ref{lemalmostmaximal}), the factorization of $f(T) \bmod \ell$ is precisely determined by the splitting of $\ell$ in $\calo_K$.  We have 
\[
\kappa(\ell) \iso \bigoplus_{\lambda | \ell}  \kappa(\ell)_\lambda,
\]
where $\lambda$ ranges over all primes of $K$ which lie over $\ell$. 
 (In fact, the dimension of $\kappa(\ell)_\lambda$ over the residue field $\calo_K/\lambda$ is $e(\lambda/\ell)$, the ramification index of $\lambda$.)


For the sequel, it is worth singling out the following immediate observation:

\begin{lemma}
\label{lemlambdabarlambda}
The symplectic pairing induces a perfect duality between $\kappa(\ell)_\lambda$  and $\kappa(\ell)_{\bar\lambda}$.
\end{lemma}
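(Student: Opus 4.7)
The plan is to promote the algebraic decomposition $\kappa(\ell) = \bigoplus_\lambda \kappa(\ell)_\lambda$ into an orthogonal decomposition with respect to the symplectic pairing, using the adjoint property of complex conjugation, and then to upgrade this orthogonality into a perfect duality by invoking non-degeneracy.

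First I would realize the decomposition via idempotents. By the Chinese Remainder Theorem applied to $\calo_K/\ell$, there exist orthogonal idempotents $e_\lambda \in \calo_K/\ell$, one for each prime $\lambda$ of $\calo_K$ above $\ell$, such that $\kappa(\ell)_\lambda = e_\lambda \kappa(\ell)$ and $\sum_\lambda e_\lambda = 1$. The involution $\iota$ acts as a ring automorphism of $\calo_K$ (and hence of $\calo_K/\ell$) and permutes the primes over $\ell$ by $\lambda \mapsto \bar\lambda$; since the idempotents are canonically attached to the maximal ideals of $\calo_K/\ell$, we get $\iota(e_\lambda) = e_{\bar\lambda}$.

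Next, I would exploit the defining adjoint property $\langle \alpha x, y \rangle = \langle x, \bar\alpha y \rangle$ to force orthogonality. For $x \in \kappa(\ell)_\lambda$ and $y \in \kappa(\ell)_{\lambda'}$,
\[
\langle x, y \rangle = \langle e_\lambda x, y \rangle = \langle x, \iota(e_\lambda) y \rangle = \langle x, e_{\bar\lambda} y \rangle,
\]
which vanishes unless $\lambda' = \bar\lambda$, since $e_{\bar\lambda} e_{\lambda'} = \delta_{\bar\lambda, \lambda'} e_{\lambda'}$. Consequently, $\kappa(\ell)_\lambda$ is orthogonal to every summand $\kappa(\ell)_{\lambda'}$ with $\lambda' \neq \bar\lambda$.

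Finally, I would conclude perfect duality from non-degeneracy. The pairing on the entire space $\kappa(\ell)$ is perfect, and the annihilator of $\kappa(\ell)_\lambda$ under the pairing contains $\bigoplus_{\lambda' \neq \bar\lambda} \kappa(\ell)_{\lambda'}$, which has codimension $\dim \kappa(\ell)_{\bar\lambda}$. A dimension count then forces the annihilator to equal this subspace exactly and the induced pairing $\kappa(\ell)_\lambda \times \kappa(\ell)_{\bar\lambda} \to \ff_\ell$ to be non-degenerate, i.e., a perfect duality. I do not expect a substantive obstacle; the only point requiring care is the identification $\iota(e_\lambda) = e_{\bar\lambda}$, which is clear from the functoriality of the CRT decomposition under the ring automorphism $\iota$.
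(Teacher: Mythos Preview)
Your argument is correct and follows the same idea as the paper: the adjoint property of complex conjugation with respect to $\langle\cdot,\cdot\rangle$ is exactly what forces $\kappa(\ell)_\lambda$ to pair nontrivially only with $\kappa(\ell)_{\bar\lambda}$. The paper records this as a one-line ``immediate observation,'' whereas you have carefully unpacked it via idempotents, the identity $\iota(e_\lambda)=e_{\bar\lambda}$, and a dimension count; this added detail is welcome but not a different route.
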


\begin{proof} 
We have chosen $\ang{\cdot,\cdot}$ such that the involution induced by complex conjugation is the adjoint with respect to $\ang{\cdot,\cdot}$.
\end{proof}

Consider a finite Galois extension  $L/\Q$ with $\gal(L/\Q)=G$.  If $\ell$ is a rational prime and $\lambda$ is a prime of $\O_L$ lying over $\ell$, the decomposition group and inertia group of $\lambda$ are, respectively,
\begin{align*}
D(\lambda/\ell) &= \st{\sigma \in G: \sigma(\lambda) = \lambda} \\
I(\lambda/\ell) &= \st{ \sigma \in G: \forall \beta \in \calo_L : \sigma(\beta) \equiv \beta \bmod \lambda}.
\end{align*}
Then $I(\lambda/\ell)$ is normal in $D(\lambda/\ell)$.  In fact, we will only use these notions for the abelian extension $K/\rat$, and thus the inertia and decomposition groups depend only on $\ell$, and not on the choice of $\lambda$.  Hence we write $I(\ell)$ and $D(\ell)$ for $I(\lambda/\ell)$ and $D(\lambda/\ell)$.

Let $f(T) \equiv \prod_{1 \le j \le r} g_j(T)^{e_j} \bmod \ell$ be the factorization of $f(T) \bmod \ell$ into irreducible monic polynomials.  
Since $\calo_K/\ell \iso \ff_\ell[T]/f(T)$, there are $r$ primes, $\lambda_1, \cdots, \lambda_r$ of $K$ lying over $\ell$; $\calo_K/\lambda_i$ has degree $\deg g_i$ over $\ff_\ell$; and the ramification index of $\lambda_i$ is $e_i$.  Note that the quantities $\deg g_i$ and $e_i$ are independent of $i$ as $K/\rat$ is Galois.  (This is why we restricted to relevant conjugacy classes in Section \ref{secclassshapes}.)

%

Finally, if $\ell\nmid \Delta_f$ then there exists an element of $\Gal(K/\Q)$ which induces the canonical generator of $\gal(\kappa(\lambda)/\ff_\ell)$.  Let $\Frob_K(\ell)\in\Gal(K/\Q)$ be this element, called the \emph{Frobenius endomorphism} of $\lambda$ over $\ell$.  

\subsection{$K$ cyclic}
\label{subsecshapecyclic}

Suppose that $\gal(K/\rat)$ is cyclic, with generator $\sigma$.  Note that complex conjugation is given by $\cc = \sigma^2$. We classify the splitting behavior of rational primes $\ell$ in $K$ by enumerating the possibilities for $D(\ell)$ and $I(\ell)$.

\begin{lemma}
\label{lemcycliccorrespondence}
Suppose $f$ satisfies assumptions \ref{enord}-\ref{enmax} with cyclic Galois group generated by $\sigma$. Let $\ell\not = p$ be a rational prime.  The cyclic shape of $\gamma_\ell$ is determined by the decomposition and inertia groups $D(\ell)$ and $I(\ell)$ as in Table \ref{tabcycliccorrespondence}.
\end{lemma}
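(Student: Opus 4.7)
The plan is to run through each splitting type of $\ell$ in $K$ and read off the shape of $\gamma_\ell$. Since $\Gal(K/\Q)=\langle\sigma\rangle\cong\Z/4\Z$ has only three subgroups, namely $\{1\}\subset\langle\sigma^2\rangle\subset\langle\sigma\rangle$, and $I(\ell)$ is a (necessarily normal) subgroup of $D(\ell)$, there are exactly six admissible pairs $(D(\ell),I(\ell))$.  For each such pair, Corollary~\ref{lemalmostmaximal} identifies
\[
\kappa(\ell)\;=\;\F_\ell[T]/f(T)\;=\;\bigoplus_{\lambda\mid\ell}\O_{K}/\lambda^{e(\lambda/\ell)},
\]
with $r=[\Gal(K/\Q):D(\ell)]$ primes, residue degree $[D(\ell):I(\ell)]$, and ramification index $|I(\ell)|$.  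This reads off the factorization type of $f(T)\bmod\ell$; and since $\O_K\otimes\F_\ell$ is free of rank one over $\F_\ell[\varpi_f]$, the element $\gamma_\ell$ is automatically cyclic, so its ``cyclic shape'' is literally its shape.

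Next, I would determine which primes $\lambda\mid\ell$ are self-conjugate.  Complex conjugation is $\cc=\sigma^2$, and since $\Gal(K/\Q)$ is abelian, $\sigma^2$ stabilizes each $\lambda$ iff $\sigma^2\in D(\ell)$.  Thus $\bar\lambda=\lambda$ in the five cases with $D(\ell)\supseteq\langle\sigma^2\rangle$, while in the sole remaining case $D(\ell)=\{1\}$ (so $\ell$ splits completely) $\cc$ pairs the four primes into two orbits of size~$2$.  Lemma~\ref{lemlambdabarlambda} then forces each self-conjugate summand $\kappa(\ell)_\lambda$ to be a symplectic subspace, and each pair $\kappa(\ell)_\lambda\oplus\kappa(\ell)_{\bar\lambda}$ with $\lambda\neq\bar\lambda$ to be an isotropic pair of dual Lagrangians.

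With the factorization type and the symplectic/isotropic decomposition of $\gamma_\ell$ in hand, I would apply the tautological constraint $m(\gamma_\ell)=q$ (forced by $\varpi_f\bar\varpi_f=q$) to match each of the six cases with a unique row of Table~\ref{tabreg} or Table~\ref{tabnonreg}.  For instance, $D=\{1\}$ yields \split; the unramified cases $D=\langle\sigma^2\rangle,I=\{1\}$ and $D=\langle\sigma\rangle,I=\{1\}$ give \dqs and \quartic; and the three ramified cases line up with \drls, \textbf{RQ-1}, and \qrl.  The main obstacle I anticipate is the $\pm$ subshape distinction in the case $(D,I)=(\langle\sigma^2\rangle,\langle\sigma^2\rangle)$ producing \drls: to resolve this I would realize $\gamma_\ell$ as multiplication by $\varpi_f$ on $\kappa(\ell)_\lambda=\O_K/\lambda^2$, pick a symplectic basis coming from the trace pairing of Section~\ref{secfrob}, and compare the resulting matrix to the representatives $\gamma_1,\gamma_2$; the distinction should hinge on whether a particular unit of $\F_\ell$ (essentially the reduction of $\varpi_f-\bar\varpi_f$, or equivalently of $\sqrt{\Delta_f/\Delta_{f^+}}$) is a square modulo $\ell$.
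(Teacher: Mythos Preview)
Your general plan matches the paper's: enumerate the six pairs $(D(\ell),I(\ell))$, read off $(e,f,r)$, and use Lemma~\ref{lemlambdabarlambda} (i.e., whether $\cc=\sigma^2\in D(\ell)$) to resolve the symplectic/isotropic ambiguity when $r=2$.  Your observation that $\gamma_\ell$ is already cyclic on $\kappa(\ell)=\ff_\ell[T]/f(T)$ is correct and slightly streamlines the exposition.

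There is, however, a genuine error in the case $(D,I)=(\langle\sigma\rangle,\langle\sigma^2\rangle)$: you claim the shape is \textbf{RQ-1}, but it is \textbf{RQ-2}.  The symplectic/isotropic criterion you invoke is vacuous here since $r=1$; the distinction between \textbf{RQ-1} and \textbf{RQ-2} is whether the constant term $g(0)$ of the repeated irreducible quadratic factor equals the multiplier $m(\gamma_\ell)=q$ or $-q$.  The paper resolves this by writing $g(T)\equiv (T-\varpi_f)(T-\sigma(\varpi_f))\bmod\lambda$ and computing $g(0)$: since $\sigma^2\in I(\ell)$ forces $\sigma(\varpi_f)\equiv\sigma^3(\varpi_f)=\bar{\sigma(\varpi_f)}\bmod\lambda$, one would get $\varpi_f\sigma(\varpi_f)\equiv q$ only if $\sigma(\varpi_f)\equiv\sigma^2(\varpi_f)\bmod\lambda$, contradicting $\sigma\notin I(\ell)$.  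Hence $g(0)\equiv -q$, and the shape is \textbf{RQ-2}.  Your toolkit (factorization type, self-conjugacy of $\lambda$, multiplier) does not see this; you need the explicit congruence argument.

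Separately, the obstacle you anticipate for \drls is not one: the lemma only asserts the cyclic \emph{shape}, and both \drls and \textbf{RQ-2} are shapes comprising two conjugacy classes each (as the paper remarks immediately after the statement).  Resolving the $\pm$ sign is neither claimed nor needed.
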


Thus, for instance, Lemma \ref{lemcycliccorrespondence} asserts that
if $D(\ell) = \ang{\sigma^2}$ and $I(\ell) = \ang 1$, then
$\gamma_\ell$ has cyclic shape \dqs.  Note that if $\gamma_\ell$ has
cyclic shape \textbf{RQ-2}, then there are two conjugacy classes with cyclic
shape $\gamma_\ell$.  Otherwise, the cyclic shape of $\gamma_\ell$
determines a unique conjugacy class.

\begin{proof}
  In Table \ref{tabcycliccorrespondence}, we have enumerated all
  possibilities for pairs of subgroups $I(\ell) \subseteq D(\ell)
  \subseteq \gal(K/\rat)$.  For each such pair, in the prime
  factorization of $f(T) \bmod \ell$, there are
  $r=\#\gal(K/\rat)/\#D(\ell)$ distinct irreducible factors. Each has
  degree $f=\#D(\ell)/\#I(\ell)$ and multiplicity $e=\#I(\ell)$ .  
When $I(\ell) \subseteq D(\ell)$ is either $\st 1 \subseteq 
\st 1$, $\st 1 \subset \ang\sigma$, or $\ang\sigma \subseteq
\ang\sigma$, this factorization pattern already determines the cyclic
shape of $\gamma_\ell$.

Of the remaining cases, we first consider those in which $D(\ell) =
\ang{\sigma^2}$.  Let $\lambda$ be one of the two primes of $K$ lying
over $\ell$.  Lemma \ref{lemlambdabarlambda} shows that
$\kappa(\ell)_\lambda$ is symplectic if and only if 
  complex conjugation stabilizes $\lambda$, i.e., if and only if $\cc
  = \sigma^2 \in D(\ell)$.  Since this happens in the two cases under
  consideration, the induced decomposition is symplectic and the cyclic
  shape of $\gamma_\ell$ is the {\bf S}
  variant.  

Finally, we analyze the situation in which $I(\ell) = \ang{\sigma^2}
\subset D(\ell) = \ang\sigma$; we must decide whether the cyclic shape
is \textbf{RQ-1} or \textbf{RQ-2}.  Let $\lambda$ be the prime of $K$ lying
over $\ell$.  Consider the Frobenius element
$\varpi_f$ as an element of $\calo_K$. Then $f(T) = \prod_{0 \le j \le
3}(T-\sigma^j(\varpi_f))$.  The ramification hypothesis implies that 
$\sigma^2(\varpi_f) \equiv \varpi_f \bmod \lambda$, and we have the
factorization
\begin{align*}
f(T) &\equiv g(T)^2 \bmod \lambda\\
\intertext{where}
g(T) & \equiv (T-\varpi_f)(T-\sigma(\varpi_f)) \bmod \lambda.
\end{align*}
By comparing constant terms, we find that
\begin{align*}
(\varpi_f \sigma(\varpi_f))^2 & \equiv q^2 \bmod \lambda\\
\intertext{and in particular} 
\varpi_f \sigma(\varpi_f) & \equiv \pm q \bmod \lambda.
\end{align*}
However, if we had $\varpi_f \sigma(\varpi_f) \equiv q \bmod \lambda$,
then we would know that $\sigma(\varpi_f)\equiv \sigma^2(\varpi_f)
\bmod \lambda$, which contradicts the hypothesis that $\sigma \not \in
I(\ell)$.  

Therefore, the constant term of the irreducible factor $g(T)$ is $-q \bmod
\lambda$, and the cyclic shape of $\gamma_\ell$ is \textbf{RQ-2}.
\end{proof}

\begin{table}[h]
\caption{\label{tabcycliccorrespondence}Prime factorizations and conjugacy class shapes for $K/\Q$ cyclic}
\centering
\begin{tabular}{||c|c|c|c|c||}
\hline\hline
$D(\ell)$ & $I(\ell)$ & $\Frob_K(\ell)$ & $(e,f,r)$ & {Class shape} \\ 
\hline
$\set{1}$ & $\set{1}$ & 1 & (1,1,4) & \split \\ 
$\gen{\sigma^2}$ & $\set{1}$ & $\sigma^2$ & (1,2,2) &\dqs  \\   
$\gen{\sigma^2}$ & $\gen{\sigma^2}$ & - & (2,1,2) & \drls   \\  
$\gen{\sigma}$ & $\set{1}$ & $\sigma$ \text{ or } $\sigma^3$ & (1,4,1) & \quartic \\ 
$\gen{\sigma}$ & $\gen{\sigma^2}$ & - & (2,2,1) & \textbf{RQ-2}  \\   
$\gen{\sigma}$ & $\gen{\sigma}$ & - & (4,1,1) & \qrl \\ \hline\hline
\end{tabular}
\label{cyclicfieldtable}
\end{table}

\subsection{$K$ biquadratic}
\label{subsecshapebiquadratic}

Suppose instead that $K=\Split(f)$ is biquadratic.  Then $K$ is the compositum of quadratic imaginary fields $K_1$ and $K_2$, and we have $\gal(K/\rat) \cong \gal(K_1/\Q) \oplus \gal(K_2/\Q) \iso \Z/2 \oplus \Z/2$; let $\tau_i$ generate $\gal(K/K_i)$.  Then complex conjugation is given by $\cc=\tau_1\tau_2$; its fixed field is the real quadratic subfield $K^+$.
%
%
We again classify the splitting behavior of primes $\ell$ by considering pairs $D(\ell)$ and $I(\ell)$.

\begin{lemma}
\label{lembiquadraticcorrespondence}
Suppose $f$ satisfies conditions \ref{enord}-\ref{enmax} with $K=K_f$ biquadratic.  Let $\ell\not = p$ be a rational prime.  The cyclic shape of $\gamma_\ell$ is determined by the decomposition and inertia groups $D(\ell)$ and $I(\ell)$ as in Table \ref{tabbiquadraticcorrespondence}.
\end{lemma}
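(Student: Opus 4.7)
The plan is to mimic the proof of Lemma \ref{lemcycliccorrespondence} essentially verbatim, with the extra bookkeeping required by the fact that $G = \gal(K/\Q) = \langle \tau_1\rangle \oplus \langle \tau_2\rangle$ has three nontrivial proper subgroups rather than one.

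First I would enumerate all possible pairs $I(\ell) \subseteq D(\ell) \subseteq G$, subject to the standard constraint that $D/I$ be cyclic (it is the Galois group of a residue field extension). Since the full group $G$ is not cyclic, the pair $(I,D)=(\{1\},G)$ is ruled out, and the remaining nine pairs yield factorization data $(e,f,r) = (\#I,\ \#D/\#I,\ \#G/\#D)$ as usual. This pins down the partition/multiplicity data for $f(T)\bmod\ell$ and hence narrows the shape down to at most a small handful of candidates in Tables \ref{tabreg}--\ref{tabnonreg}.

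Second, whenever $f(T)\bmod\ell$ has more than one irreducible factor, I use Lemma \ref{lemlambdabarlambda} to decide between the symplectic and isotropic variants: the local factor $\kappa(\ell)_\lambda$ carries a perfect form (as opposed to being paired with a distinct $\kappa(\ell)_{\bar\lambda}$) precisely when $\iota(\lambda)=\lambda$, i.e.\ when $\iota \in D(\ell)$. Since $\iota = \tau_1\tau_2$, this criterion neatly separates the case $D(\ell) = \langle\iota\rangle$ (symplectic, giving \dqs in the $(1,2,2)$ row) from $D(\ell) = \langle\tau_i\rangle$ (isotropic, giving \dqi), and likewise separates \drls from \drli in the $(2,1,2)$ row.

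The main obstacle is to distinguish \textbf{RQ-1} from \textbf{RQ-2} in the three cases $(e,f,r)=(2,2,1)$, which correspond to $D=G$ with $I\in\{\langle\tau_1\rangle,\langle\tau_2\rangle,\langle\iota\rangle\}$. In each case there is a unique prime $\lambda\mid \ell$, and $f(T)\equiv g(T)^2 \bmod\lambda$ for an irreducible quadratic $g\in\F_\ell[T]$; by comparing constant terms exactly as in the cyclic proof, $g(0)\equiv \pm q\bmod\lambda$, and \textbf{RQ-1} corresponds to $+q$. I would argue on cases: if $I = \langle\iota\rangle$, then $\bar\varpi_f\equiv\varpi_f\bmod\lambda$, and $g(0)\equiv q$ would force the Frobenius representative $\tau_1$ (or $\tau_2$) to identify $\varpi_f$ with some other conjugate modulo $\lambda$ in a way that puts $\tau_1$ (resp.\ $\tau_2$) into $I$, contradicting $I=\langle\iota\rangle$; hence $g(0)\equiv -q$ and the shape is \textbf{RQ-2}. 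If instead $I = \langle\tau_i\rangle$ (so $\iota\notin I$), then Frobenius lies in the coset $\{\tau_{3-i},\iota\}$, so writing $g(T)\equiv (T-\varpi_f)(T-\iota(\varpi_f))\bmod\lambda$ and using $\tau_i\equiv 1\bmod\lambda$ yields $g(0)\equiv \varpi_f\bar\varpi_f = q\bmod\lambda$ directly, giving \textbf{RQ-1}.

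Assembling the resulting data into Table \ref{tabbiquadraticcorrespondence} completes the proof. The conceptual heart is the sign calculation above; everything else is a tidy enumeration over the subgroup lattice of the Klein four-group.
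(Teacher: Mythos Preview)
Your proposal is correct and follows essentially the same approach as the paper: enumerate pairs $I(\ell)\subseteq D(\ell)$, use Lemma~\ref{lemlambdabarlambda} and the criterion $\iota\in D(\ell)$ to separate symplectic from isotropic variants, and then compute $g(0)\bmod\lambda$ to distinguish \textbf{RQ-1} from \textbf{RQ-2}. The paper only spells out the direct computation for $I(\ell)=\langle\tau_i\rangle$ (yielding $g(0)\equiv q$ and hence \textbf{RQ-1}) and relegates the $I(\ell)=\langle\iota\rangle$ case to ``the remaining cases follow in an analogous fashion''; you handle the latter explicitly via the contradiction argument borrowed from the cyclic lemma, which is fine and indeed is the analogue the paper has in mind. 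A tiny bookkeeping slip: your count of ``nine pairs'' is off (and, like the paper's own Table~\ref{tabbiquadraticcorrespondence}, your enumeration tacitly omits the totally ramified case $I(\ell)=D(\ell)=G$, which would give \qrl); this does not affect the argument.
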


As before, the cyclic shape of $\gamma_\ell$ determines a unique conjugacy class except when that shape is either \drls or \textbf{RQ-2}.  In these cases, there are two conjugacy classes with the cyclic shape given by $\gamma_\ell$.

\begin{proof}
Proceed as in the proof of Lemma \ref{lemcycliccorrespondence}.
Note that, by Lemma \ref{lemlambdabarlambda}, $\kappa(\ell)_\lambda$
is isotropic if and only if complex conjugation acts nontrivially on
$\lambda$, i.e., if and only if $\cc=\tau_1\tau_2 \notin D(\ell)$.  In
these cases the induced decomposition is isotropic and the cyclic
shape of $\gamma_\ell$ is the \textbf{I} variant. All ambiguous cases
where $r>1$ can be identified by the action of the pairing on the
induced decomposition.  

Now (without loss of generality) suppose that $I(\ell) = \ang{\tau_1} \subset D(\ell) =
\ang{\tau_1,\tau_2}$, and let $\lambda$ be the prime lying over $\ell$.  As in Lemma \ref{lemcycliccorrespondence}, we
recall that
\[
f(T) \equiv (T - \varpi_f)
(T-\tau_1(\varpi_f))(T-\tau_2(\varpi_f))(T-\tau_1\tau_2(\varpi_f))
\bmod \lambda.
\]
The assumption on ramification implies that $\varpi_f \equiv
\tau_1(\varpi_f)\bmod \lambda$, and thus that $\tau_2(\varpi_f) \equiv
\tau_1\tau_2(\varpi_f)\bmod \lambda$.  Therefore, $f(T)$ factors as
\[
f(T) \equiv ((T - \varpi_f)(T - \tau_2(\varpi_f)))^2 \bmod \lambda.
\]
Moreover, $\varpi_f \tau_2 (\varpi_f) \equiv \varpi_f
\tau_1\tau_2(\varpi_f) \equiv q\bmod \lambda$.  Therefore, $f(T)
\equiv g(T)^2 \bmod \lambda$ where $g(0) = q$, and the cyclic shape of
$\gamma_\ell$ is \textbf{RQ-1}.

The remaining cases follow in an analogous fashion.
\end{proof}

\begin{table}[h]
\caption{\label{tabbiquadraticcorrespondence}Prime factorizations and conjugacy class shapes for $K/\Q$ biquadratic}
\centering
\begin{tabular}{||c|c|c|c|c||}
\hline\hline
$D(\ell)$ & $I(\ell)$ & $\Frob_K(\ell)$ & $(e,f,r)$ & \text{Class shape} \\ 
\hline
$\set{1}$ & $\set{1}$ & 1 & (1,1,4) & \split \\ 
$\gen{\tau_i}$ & $\set{1}$ & $\tau_i$ & (1,2,2) & \dqi   \\   
$\gen{\tau_i}$ & $\gen{\tau_i}$ & -  & (2,1,2) & \drli   \\   
$\gen{\tau_1\tau_2}$ & $\set{1}$ & $\tau_1\tau_2$ & (1,2,2) & \dqs   \\   
$\gen{\tau_1\tau_2}$ & $\gen{\tau_1\tau_2}$ & - & (2,1,2) & \drls   \\   
$\gen{\tau_1,\tau_2}$ & $\gen{\tau_i}$ & - & (2,2,1) & \textbf{RQ-1} \\  
$\gen{\tau_1,\tau_2}$ & $\gen{\tau_1\tau_2}$ & - & (2,2,1) & \textbf{RQ-2} \\ 
\hline\hline
\end{tabular}
\label{biquadraticfieldtable}
\end{table}

\section{Local terms for $K$}

Let $\gal(K/\rat)^*$ be the character group of the Galois group of $K$. For $\chi \in \gal(K/\rat)^*$, let $K^\chi$ be the subfield of $K$ fixed by $\ker\chi$.  For a rational prime $\ell$, let $\chi(\ell) = \chi(\Frob_{K^\chi}(\ell))$ if $\ell$ is unramified in $K^\chi$, and let $\chi(\ell) = 0$ otherwise.

Since $K^+$ is a subextension of $K$, $\gal(K^+/\rat)^*$ is naturally a subgroup of $\gal(K/\rat)^*$, and we define
\begin{align}
\label{eqdefnuk}
\nu_\ell(K) & = \prod_{\chi \in S(K)} \oneover{1-\chi(\ell)/\ell}
\intertext{where }
S(K) &= \gal(K/\rat)^* \setcomp \gal(K^+/\rat)^*.
\end{align}


\subsection{$K$ cyclic}

As in Section \ref{subsecshapecyclic}, let $\gal(K/\rat) = \ang \sigma$.  Let $\chi$ be a
faithful character of $\gal(K/\rat)$, so that $\bar\chi := \chi \comp
\iota$ is the other faithful character of $\gal(K/\rat)$ and
$S(K) = \st{\chi,\bar\chi}$.

\begin{lemma}
\label{lemcycliccharval}
Let $\ell$ be a rational prime.  The multiset of values
$\st{\chi(\ell),\bar\chi(\ell)}$ is determined by the decomposition
and inertia groups $D(\ell)$ and $I(\ell)$ as in Table \ref{tabcyclicnuellk}.
\end{lemma}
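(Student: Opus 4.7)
The plan is to unpack the definition of $\chi(\ell)$ in this setting and then run through the six rows of Table \ref{tabcycliccorrespondence} case-by-case. Since $\gal(K/\rat) = \ang{\sigma}$ is cyclic of order $4$, a faithful character $\chi$ has trivial kernel, so $K^\chi = K$; consequently $\chi(\ell) = \chi(\Frob_K(\ell))$ precisely when $\ell$ is unramified in $K$ (i.e., $I(\ell) = \st 1$), and $\chi(\ell) = 0$ otherwise. The same statement applies to $\bar\chi = \chi\comp\iota$.

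First I would dispense with the ramified cases. In rows $3$, $5$, $6$ of Table \ref{tabcycliccorrespondence} one has $I(\ell) \neq \st 1$, so both $\chi(\ell)$ and $\bar\chi(\ell)$ vanish and the multiset is $\st{0,0}$. This covers half the table with essentially no work.

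For the three unramified cases I would fix the parametrization $\chi(\sigma) = i$ (a primitive fourth root of unity), so that $\bar\chi(\sigma) = \chi(\sigma^{-1}) = -i$. Then I would simply evaluate:
\begin{itemize}
\item If $D(\ell) = \st 1$, then $\Frob_K(\ell) = 1$ and the multiset is $\st{1,1}$.
\item If $D(\ell) = \ang{\sigma^2}$ and $I(\ell) = \st 1$, then $\Frob_K(\ell) = \sigma^2$ and the multiset is $\st{-1,-1}$.
\item If $D(\ell) = \ang\sigma$ and $I(\ell) = \st 1$, then $\Frob_K(\ell) \in \st{\sigma,\sigma^3}$, and the multiset is $\st{i,-i}$.
\end{itemize}

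The only mild subtlety, and likely the only thing worth remarking on in print, is the last case: $\Frob_K(\ell)$ is only well-defined up to the ambiguity of choosing a prime $\lambda$ above $\ell$, so it could be either $\sigma$ or $\sigma^3$. However the multiset $\st{\chi(g),\bar\chi(g)}$ is invariant under the substitution $g \mapsto g^{-1}$ (which simply swaps $\chi$ and $\bar\chi$), so $\st{\chi(\sigma),\bar\chi(\sigma)} = \st{\chi(\sigma^3),\bar\chi(\sigma^3)} = \st{i,-i}$. Comparing the six resulting multisets with the entries in Table \ref{tabcyclicnuellk} completes the proof.
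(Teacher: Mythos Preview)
Your proof is correct and follows essentially the same approach as the paper's: unwind the definition of $\chi(\ell)$ and read off the values from the Frobenius data in Table \ref{tabcycliccorrespondence}. One small inaccuracy: since $K/\rat$ is abelian, $\Frob_K(\ell)$ does \emph{not} depend on the choice of $\lambda$ over $\ell$; the reason the table lists ``$\sigma$ or $\sigma^3$'' is that knowing only $D(\ell)=\ang\sigma$ and $I(\ell)=\st 1$ does not pin down which generator of $D(\ell)$ is Frobenius, so different primes $\ell$ with the same $(D(\ell),I(\ell))$ can give either. Your verification that the multiset is $\st{i,-i}$ in both cases handles this correctly regardless.
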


\begin{proof}
This follows from the definition of $\st{\chi,\bar\chi}$ and the
calculation of Frobenius elements in Lemma \ref{lemcycliccorrespondence}. In particular, 
$\st{\chi(\ell),\bar\chi(\ell)}$ depends only on the order of
$D(\ell)$ and $I(\ell)$, and not on their canonical generators; and is
also independent of the choice of generator of $\gal(K/\rat)^*$.
\end{proof}

\begin{table}[h]
\caption{\label{tabcyclicnuellk}Values of imaginary characters on
  Frobenius elements for $K/\rat$ cyclic.}
\[
\begin{array}{||c|c|c|c||}
\hline\hline
D(\ell) & I(\ell) &\st{\chi(\ell),\bar\chi(\ell)} &\text{Class shape}\\ \hline
\st 1 & \st 1 & \st{1,1} &\split \\
\ang{\sigma^2} & \st 1 & \st{-1,-1} & \dqs\\
\ang{\sigma^2} & \ang{\sigma^2} & \st{0,0}&\drls \\
\ang \sigma & \st 1 & \st{-i,i} &\quartic\\
\ang \sigma & \ang{\sigma^2} & \st{0,0} &\textbf{RQ-2}\\
\ang \sigma & \ang\sigma & \st{0,0}&\text{\qrl}\\ \hline\hline
\end{array}
\]
\end{table}

\subsection{$K$ biquadratic}

%

As in Section \ref{subsecshapebiquadratic}, let  $\gal(K/\Q) = \ang{\tau_1,\tau_2}$.  Denote the quadratic imaginary subfields of $K$ by  $K_1$ and $K_2$ and let $\gal(K/K_i) = \ang{\tau_i}$.  For $i \in \st{1,2}$, define
the character
\[
\phi_i(\tau_j) := \begin{cases} -1 & i=j, \\
1 & i\not = j.
\end{cases}
\]
Then $S(K)  = \st{\phi_1,\phi_2}$.

\begin{lemma}
\label{lembiquadraticcharval}
Let $\ell$ be a rational prime.  The multiset of values
$\st{\phi_1(\ell),\phi_2(\ell)}$ is determined by the decomposition
and inertia groups $D(\ell)$ and $I(\ell)$ as in Table \ref{tabbiquadraticnuellk}.
\end{lemma}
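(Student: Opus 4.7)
The plan parallels the proof of Lemma \ref{lemcycliccharval}: interpret each $\phi_i$ as the quadratic character cut out by a quadratic subfield of $K$, and then translate the splitting behavior of $\ell$ in that subfield into a condition on $D(\ell)$ and $I(\ell)$.

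First, I would identify the fixed field $K^{\phi_i}$. From the defining values of $\phi_i$ one has $\ker \phi_i = \langle \tau_{3-i} \rangle = \gal(K/K_{3-i})$, so $K^{\phi_i} = K_{3-i}$. Hence $\phi_i$ is the inflation of the unique nontrivial character of $\gal(K_{3-i}/\Q)$, and $\phi_i(\ell)$ equals $1$, $-1$, or $0$ according as $\ell$ splits completely, is inert, or ramifies in $K_{3-i}$.

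Next, I would express these alternatives in terms of $D(\ell)$ and $I(\ell)$ inside $\gal(K/\Q)$. Writing $\rho_j \colon \gal(K/\Q) \to \gal(K_j/\Q)$ for the restriction map, whose kernel is $\langle \tau_j \rangle$, standard Galois theory identifies the decomposition (resp.\ inertia) group of a prime of $K_j$ over $\ell$ with $\rho_j(D(\ell))$ (resp.\ $\rho_j(I(\ell))$). Therefore $\ell$ ramifies in $K_j$ if and only if $I(\ell) \not\subseteq \langle \tau_j \rangle$, and, when unramified, is inert in $K_j$ if and only if $D(\ell) \not\subseteq \langle \tau_j \rangle$.

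Applying these two criteria for $j=1,2$ to each row of Table \ref{tabbiquadraticcorrespondence} then reads off the multiset $\{\phi_1(\ell),\phi_2(\ell)\}$: for instance \drli, with $D(\ell) = I(\ell) = \langle \tau_1 \rangle$ (say), gives $\phi_2(\ell) = 1$ (since $\langle\tau_1\rangle \subseteq \langle\tau_1\rangle$ forces $\ell$ to split in $K_1$) and $\phi_1(\ell) = 0$ (since $\langle\tau_1\rangle \not\subseteq \langle\tau_2\rangle$ forces $\ell$ to ramify in $K_2$). The outer automorphism swapping $\tau_1 \leftrightarrow \tau_2$ simultaneously exchanges $\phi_1$ with $\phi_2$, so the multiset is invariant under the indexing ambiguity in the rows \dqi, \drli, and \textbf{RQ-1}. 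I do not anticipate a genuine obstacle: the only care needed is to keep $\phi_i$ paired with $K_{3-i}$, after which each row of the table is a one-line check.
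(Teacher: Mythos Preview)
Your proposal is correct. The paper actually gives no explicit proof of this lemma, leaving it implicit as the analogue of Lemma~\ref{lemcycliccharval}; your argument is exactly the unpacking of the paper's definition $\phi_i(\ell)=\phi_i(\Frob_{K^{\phi_i}}(\ell))$ (with value $0$ when $\ell$ ramifies in $K^{\phi_i}$), and the identification $K^{\phi_i}=K_{3-i}$ together with the restriction of $D(\ell),I(\ell)$ to the quadratic subfields is precisely what is needed to read off each row.
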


\begin{table}[h]
\caption{\label{tabbiquadraticnuellk}Values of imaginary characters on
  Frobenius elements for $K/\rat$ biquadratic.}
\[
\begin{array}{||c|c|c|c||}
\hline\hline
D(\ell) & I(\ell) &\st{\phi_1(\ell),\phi_2(\ell)} &\text{Class shape}\\ \hline
\st 1 & \st 1 & \st{1,1} &\split\\
\ang{\tau_i} & \set{1} & \st{-1,1} & \dqi\\
\ang{\tau_i} & \ang{\tau_i} &  \st{0,1}& \drli\\
\ang{\tau_1\tau_2} & \set 1 & \st{-1,-1} &\dqs\\
\ang{\tau_1\tau_2} & \ang{\tau_1\tau_2} & \st{0,0}& \drls\\
\ang{\tau_1,\tau_2} & \ang{\tau_i} & \st{0,-1} &\textbf{RQ-1} \\
\ang{\tau_1,\tau_2} & \ang{\tau_1\tau_2} & \st{0,0} &\textbf{RQ-2} \\\hline\hline
\end{array}
\]
\end{table}

\subsection{Matching}

In this section, we show that each of the local factors na\"ively assigned to $f$ matches a factor intrinsic to the splitting field $K = K_f$.

\begin{proposition}
\label{propmatchell}
If $\ell\not = p$, then
\[
\nu_\ell(f) = \nu_\ell(K).
\]
\end{proposition}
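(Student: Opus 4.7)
The plan is a case-by-case verification, which is tractable because both sides of the equality $\nu_\ell(f) = \nu_\ell(K)$ have been reduced by the preceding lemmas to functions of the pair $(D(\ell), I(\ell)) \subseteq \Gal(K/\rat)$.

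First I would set up the counting on the left. Since the multiplier map is a surjection $\gsp_4(\ff_\ell) \twoheadrightarrow \ff_\ell^\times$ with kernel $\SP_4(\ff_\ell)$, we have $\#\gsp_4(\ff_\ell)^{(q)} = \#\SP_4(\ff_\ell) = \ell^4(\ell^2-1)(\ell^4-1)$ and $\#\gsp_4(\ff_\ell) = (\ell-1)\cdot\#\SP_4(\ff_\ell)$. Conjugation by $\gsp_4(\ff_\ell)$ preserves both the multiplier and cyclicity, so the numerator in \eqref{eqredefnuell} is a sum of $\gsp_4(\ff_\ell)$-orbit sizes, and one obtains
\[
\nu_\ell(f) \;=\; \frac{n\,\ell^2(\ell-1)}{\#\cent_{\gsp_4(\ff_\ell)}(\gamma)},
\]
where $\gamma$ is any cyclic representative with semisimplification $\gamma_\ell$ and $n$ is the number of cyclic $\gsp_4(\ff_\ell)$-conjugacy classes sharing that semisimplification. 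By the discussion in Section \ref{secclassshapes}, $n=2$ for the shapes \drls and \textbf{RQ-2}, and $n=1$ otherwise.

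Second, plugging in the centralizer orders of Lemmas \ref{GSptoruscent} and \ref{GSpnonregularcent}, this formula yields, for the nine shapes \split, \dqs, \dqi, \quartic, \drls, \drli, \qrl, \textbf{RQ-1}, \textbf{RQ-2}, the respective values
\[
\tfrac{\ell^2}{(\ell-1)^2},\ \tfrac{\ell^2}{(\ell+1)^2},\ \tfrac{\ell^2}{\ell^2-1},\ \tfrac{\ell^2}{\ell^2+1},\ 1,\ \tfrac{\ell}{\ell-1},\ 1,\ \tfrac{\ell}{\ell+1},\ 1.
\]
The factors of $2$ from the two conjugacy classes in the \drls and \textbf{RQ-2} cases are precisely absorbed by the matching $2$'s in their centralizer orders.

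Third, I would compute $\nu_\ell(K) = \prod_{\chi \in S(K)}(1 - \chi(\ell)/\ell)^{-1}$ directly from Tables \ref{tabcyclicnuellk} and \ref{tabbiquadraticnuellk} by reading the multiset $\{\chi(\ell)\}_{\chi \in S(K)}$ off each row. Each resulting expression equals the value associated above to the shape that Lemma \ref{lemcycliccorrespondence} or \ref{lembiquadraticcorrespondence} attaches to the same pair $(D(\ell), I(\ell))$; for instance the biquadratic row for \dqi yields $\{-1,1\}$ and $\nu_\ell(K) = \ell^2/(\ell^2-1)$, while the cyclic row for \quartic yields $\{-i,i\}$ and $\nu_\ell(K) = \ell^2/(\ell^2+1)$. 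The main obstacle is purely bookkeeping; there is no conceptual difficulty beyond keeping track of the factor $n$ in the non-semisimple cases, since the tables in Section \ref{secfrob} and the present section have been organized precisely so that $(D(\ell), I(\ell))$ determines both sides.
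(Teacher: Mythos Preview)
Your proposal is correct and follows essentially the same approach as the paper: both reduce to the formula $\nu_\ell(f) = \ell^2(\ell-1)/\#\cent(\gamma)$ (with the extra factor in the \drls and \textbf{RQ-2} cases), compute both sides shape-by-shape from Lemmas \ref{GSptoruscent}, \ref{GSpnonregularcent}, \ref{lemcycliccharval}, \ref{lembiquadraticcharval}, and match the resulting columns. The only cosmetic difference is that you fold the two-class cases into a single formula via the multiplicity $n$, whereas the paper handles $n=1$ first and then notes that for \drls and \textbf{RQ-2} the two classes each contribute $\tfrac12$.
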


\begin{proof}
Let $\gamma_\ell$ be as in Section \ref{secfrob}.  We first assume that the cyclic shape
of $\gamma_\ell$ determines a unique conjugacy class in
$\gsp_4(\ff_\ell)$, and then indicate what must be changed to
accommodate the remaining cases. 

Thus, let $\gamma$ be any cyclic element
whose semisimplification is $\gamma_\ell$, and assume the shape of
$\gamma$ is neither \drls nor \textbf{RQ-2}. 
By Lemmas \ref{lemcycliccharval} and \ref{lembiquadraticcharval}, the
set of character values
\[
\st{\chi(\ell) : \chi \in S(K)}
\]
depends only on the shape of $\gamma$; and tautologically, the size of
the conjugacy class $\calc(\gamma)$ only depends on the shape of
$\gamma$, too.   

On one hand, by \eqref{eqredefnuell} we have 
\begin{align*}
\nu_\ell(f) &= \frac{\#\calc(\gamma)}{\#\GSp_4(\ff_\ell)^{(q)}/\ell^2} \\
&=
\frac{\#\gsp_4(\ff_\ell)/\#\cent(\gamma)}{\#\GSp_4(\ff_\ell)^{(q)}/\ell^2}
\\
&= \frac{\ell^2(\ell-1)}{\#\cent(\gamma)}.
\end{align*}
Lemmas \ref{GSptoruscent} and \ref{GSpnonregularcent}
supply column 2 of Table \ref{tabmatching}, and applying this simple calculation provides column 3.

On the other hand, recall that \eqref{eqdefnuk} gives
\[
\nu_\ell(K) = \prod_{\chi \in S(K)}\oneover{1-\chi(\ell)/\ell}.
\]
Lemmas \ref{lemcycliccharval} and \ref{lembiquadraticcharval}
provide column 4 of Table \ref{tabmatching}, and we compute column 5 using \eqref{eqdefnuk}.

If $\gamma_\ell$ has cyclic shape of type \drls or \textbf{RQ-2}, then there
are two cyclic conjugacy classes with semisimplification
$\gamma_\ell$.  For a representative $\gamma$ of each class,
$\#\calc(\gamma)/(\#\GSp_4(\ff_\ell)^{(q)}/\ell^2) = \half 1$, and thus
$\nu_\ell(f) = \half 1 + \half 1$.

As columns 3 and 5 are equal, the theorem is proven.
\end{proof}

\begin{table}[h]
\caption{\label{tabmatching}$\nu_\ell(f)$ and $\nu_\ell(K)$}
\[
\begin{array}{||c|c|c|c|c||}
\hline\hline
\text{Class shape} & \#\cent(\gamma) & \nu_\ell(f) & \st{\chi(\ell): \chi
  \in S(K)} & \nu_\ell(K) \\ \hline
\split & (\ell-1)^3 & \frac{\ell^2}{(\ell-1)^2} & \st{1,1} &
\frac{\ell}{\ell-1}\cdot \frac{\ell}{\ell-1} \\ 
\dqs &(\ell+1)^2(\ell-1) &
\frac{\ell^2}{(\ell+1)^2}&\st{-1,-1}&
\frac{\ell}{\ell+1}\cdot\frac{\ell}{\ell+1}\\ 
\dqi & (\ell+1)(\ell-1)^2& 
\frac{\ell^2}{\ell^2-1}&\st{-1,1}&\frac{\ell}{\ell+1}\cdot\frac{\ell}{\ell-1}\\ 
\quartic& (\ell^2+1)(\ell-1)&
 \frac{\ell^2}{\ell^2+1}&\st{-i,i}&
 \frac{\ell}{\ell-i}\cdot\frac{\ell}{\ell+i}\\ 
\qrl& \ell^2(\ell-1) &1&\st{0,0} & 1 \cdot 1 \\
\drls& 2\ell^2(\ell-1)& 1&\st{0,0} & 1 \cdot 1 \\ 
\drli & \ell(\ell-1)^2&
\frac{\ell}{\ell-1} &\st{0,1} &\frac{\ell}{\ell-1} \cdot 1\\ 
\textbf{RQ-1} & \ell(\ell^2-1) & 
\frac{\ell}{\ell+1} &\st{-1,0}& \frac{\ell}{\ell+1} \cdot 1\\ 
\textbf{RQ-2} & 2\ell^2(\ell-1)& 1 & \st{0,0} & 1 \cdot 1 \\ \hline\hline
\end{array}
\]
\end{table}

Similarly:

\begin{lemma}
\label{lemmatchp}
We have
\[
\nu_p(f) = \nu_p(K).
\]
\end{lemma}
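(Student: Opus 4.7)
The plan is to follow the template of Proposition~\ref{propmatchell}, now with $\ell = p$ and the ordinary hypothesis playing the role of the ``unramified in $K$'' condition used there.

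First I would use ordinariness to restrict the decomposition group $D(p) \subseteq \Gal(K/\rat)$. Since $\varpi_f \bar\varpi_f = q = p^e$ and $X$ is ordinary, at each prime $\lambda$ of $\calo_K$ over $p$ exactly one of $v_\lambda(\varpi_f)$, $v_\lambda(\bar\varpi_f)$ vanishes. If complex conjugation $\iota$ stabilized $\lambda$, the two valuations would coincide and sum to $e > 0$, a contradiction; hence $\iota \notin D(p)$. Combined with $I(p) = \{1\}$ from assumption~\ref{engal}, this forces $D(p) = \{1\}$ in the cyclic case, and $D(p) \in \{\{1\}, \gen{\tau_1}, \gen{\tau_2}\}$ in the biquadratic case. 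In each surviving case I would then read off $\nu_p(K)$ from Lemmas~\ref{lemcycliccharval} and~\ref{lembiquadraticcharval}: when $D(p) = \{1\}$ the Frobenius is trivial and every character of $S(K)$ takes the value $1$, giving $\nu_p(K) = p^2/(p-1)^2$; when $D(p) = \gen{\tau_i}$ (biquadratic case) one of $\phi_1(p), \phi_2(p)$ is $-1$ and the other is $+1$, giving $\nu_p(K) = p^2/(p^2-1)$.

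The bulk of the work is the computation of $\nu_p(f)$. The factorization of $g(T) = T^2 - aT + b$ modulo $p$ is controlled by $D(p)$ exactly as in Section~\ref{secfrob}: $g$ factors into distinct linear factors precisely when $p$ splits completely in $K$, and $g$ is irreducible when $D(p) = \gen{\tau_i}$. (Separability of $g \bmod p$ follows from $\disc(g) = a^2 - 4b \equiv a^2 - 4b + 8q = \disc(f^+) \pmod p$, together with Lemma~\ref{lemmaximalrealorder} and the unramifiedness of $p$ in $K^+$.) In each case I would classify the semisimple conjugacy classes of $\gsp_4(\F_p)^{(b^2)}$ whose characteristic polynomial is $(T^2 - aT + b)^2 \bmod p$ by analyzing which symplectic shapes of the two $2$-dimensional generalized eigenspaces are compatible with the multiplier condition, and compute the associated centralizer orders in the spirit of Lemmas~\ref{GSptoruscent} and~\ref{GSpnonregularcent}. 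Assembling the ratio case-by-case and matching with the values of $\nu_p(K)$ above yields the desired equality.

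The principal obstacle is this last classification: a semisimple $\gamma$ whose characteristic polynomial is a nontrivial square is scalar on each generalized eigenspace and so is \emph{not} cyclic, placing it outside Tables~\ref{tabreg}--\ref{tabnonreg}. Its conjugacy class is instead determined by whether each $2$-dimensional eigenspace is self-paired under the symplectic form (forcing the corresponding eigenvalue $\alpha$ to satisfy $\alpha^2 = m(\gamma)$) or paired isotropically with its partner (forcing $\alpha_1 \alpha_2 = m(\gamma)$), and its centralizer is the evident block-diagonal Levi-type subgroup. Once these centralizers are computed and the class sizes are assembled, the numerical comparison with $\nu_p(K)$ proceeds exactly as in Proposition~\ref{propmatchell}.
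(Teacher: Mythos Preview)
Your proposal is correct and follows the same route as the paper: constrain how $p$ splits in $K$, then compute the size of the set of semisimple elements of $\GSp_4(\F_p)$ with characteristic polynomial $g(T)^2$ and match it against $\nu_p(K)$. Two minor differences are worth noting. First, the paper obtains the splitting behavior of $p$ by citing \cite{gorenlauter12} rather than via your (correct) valuation argument that $\iota\notin D(p)$. Second, it bypasses the direct centralizer computation you outline by simply asserting that the set of semisimple elements in question has the same cardinality as a single conjugacy class of shape \split (respectively \dqi when $D(p)=\gen{\tau_i}$), so that $\nu_p(f)$ can be read straight from Table~\ref{tabmatching}; your more explicit analysis of the isotropic/symplectic pairing on the two eigenspaces is exactly what justifies that assertion.
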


\begin{proof}
Since we have assumed $p$ unramified in $K$ (\ref{engal}), $g(T) := T^2-aT+b$
is not a square.  For convenience, we recall the definition
\[
\nu_p(f) = \frac{\#\st{\gamma \in \gsp_4(\ff_p)^{(b^2)} :
    f_\gamma(T) \equiv g(T)^2\bmod p\text{ and
    }\gamma\text{ semisimple}}}
{\#\gsp_4(\ff_p)^{(b^2)}/p^2}.
\]

First suppose that $K/\rat$ is cyclic.  Then $p$ splits completely in
$K$ (e.g., \cite[Table 3]{gorenlauter12}), and $g(T)$ factors (in $\ff_p$).
The set of {\em semisimple} elements with characteristic polynomial
$g(T)^2$ has the same cardinality as a conjugacy class of type
$\split$.  From (the first line of) Table \ref{tabmatching}, we see
that $\nu_p(f) = \nu_p(K)$.

Now instead suppose that $K/\rat$ is biquadratic.  Then either $p$
splits completely in $K$, or $p$ splits in exactly one of the $K_i$
(\cite[Table 4]{gorenlauter12}).  The former case has already been
addressed.  For the latter case, the set of semisimple elements with
characteristic polynomial $g(T)^2$ has the same cardinality as a
conjugacy class of type $\dqi$.  Again we conclude from Table
\ref{tabmatching} that $\nu_p(f) = \nu_p(K)$.
\end{proof}

Finally, we compute:

\begin{lemma}
\label{lemmatchinfinity}
We have
\[
\nu_\infty(f) = \oneover{4\pi^2}\sqrt{\abs{\frac{\Delta_K}{\Delta_{K^+}}}}.
\]
\end{lemma}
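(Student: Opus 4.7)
The plan is to unwind the definition of $\nu_\infty(f)$ in \eqref{eqdefnuinfty} and substitute the discriminant relations already established in Section \ref{secweilpolys}. Everything reduces to three facts: Lemma \ref{lemconductorq} identifies $\cond(f)$ with $q$; Lemma \ref{lemmaximalrealorder} identifies $\Z[\varpi_f+\bar\varpi_f]$ with $\O_{K^+}$, so $\Delta_{f^+}=\Delta_{K^+}$; and the standard discriminant--index relation
\[
\Delta_{\Z[\varpi_f]} = [\O_K:\Z[\varpi_f]]^2\,\Delta_{\O_K}
\]
combined with Lemma \ref{lemconductorq} gives $\Delta_f = q^2\,\Delta_K$.

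First I would write
\[
\nu_\infty(f)\;=\;\frac{1}{\cond(f)\cdot 4\pi^2}\sqrt{\left|\frac{\Delta_f}{\Delta_{f^+}}\right|}.
\]
Then I substitute $\cond(f)=q$, $\Delta_f = q^2\,\Delta_K$, and $\Delta_{f^+}=\Delta_{K^+}$ to obtain
\[
\nu_\infty(f)\;=\;\frac{1}{q\cdot 4\pi^2}\sqrt{\left|\frac{q^2\,\Delta_K}{\Delta_{K^+}}\right|}\;=\;\frac{1}{4\pi^2}\sqrt{\left|\frac{\Delta_K}{\Delta_{K^+}}\right|},
\]
which is the stated identity.

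There is no real obstacle here; the content of the lemma is entirely absorbed into the earlier computations of Section \ref{secweilpolys}. The only thing to be careful about is the bookkeeping of signs and absolute values (both $\Delta_f$ and $\Delta_K$ are negative for a CM quartic, while $\Delta_{f^+}$ and $\Delta_{K^+}$ are positive, so the ratios under the square root are unambiguously positive and the factor $q^2$ can be pulled out as $q$).
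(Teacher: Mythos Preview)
Your proposal is correct and follows exactly the same approach as the paper's proof: invoke Lemma \ref{lemconductorq} for $\cond(f)=q$ and $\Delta_f=q^2\Delta_K$, invoke Lemma \ref{lemmaximalrealorder} for $\Delta_{f^+}=\Delta_{K^+}$, and substitute into \eqref{eqdefnuinfty}. Your additional remark about signs is a nice touch but not strictly needed, since the definition \eqref{eqdefnuinfty} already carries absolute values.
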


\begin{proof}
From Lemma \ref{lemconductorq} we have $\cond(f)=q$ and  $\Delta_f =q^2 \Delta_K$.  Additionally, Lemma \ref{lemmaximalrealorder} implies that $\Delta_{f^+}=\Delta_{K^+}$.  Then \eqref{eqdefnuinfty} gives the result.
\end{proof}

\subsection{Comparison with \cite{gekeler03}}
\label{subsecgekeler}

As we mentioned in the introduction, the present work is inspired by
Gekeler's work \cite{gekeler03} with ordinary isogeny classes of
elliptic curves over $\ff_p$.  He starts with an ordinary quadratic
$p$-Weil polynomial $g(T)$; defines
\begin{equation}
\label{eqgekeler}
\nu_\ell^{\operatorname{G}}(g) =\lim_{r \ra\infty}\frac{\#\st{\gamma \in \GL_2(\integ/\ell^r)^{(q)}:
    f_\gamma \equiv g \bmod
    \ell^r}}{\#\GL_2(\integ/\ell^r)^{(q)}/\ell^r};
\end{equation}
and among other results shows \cite[Cor.\ 4.8]{gekeler03} that if $\ell^2\nmid \Delta_g$,
then $\nu_\ell^{\operatorname{G}}(g) = 1/(1-\chi(\ell)/\ell)$, where $\chi$ is the
quadratic character of the splitting field of $g$.

If $\ell \nmid \Delta_g$, then the centralizer of an element with
characteristic polynomial $g$ is smooth over $\integ_\ell$, and thus
setting $r=1$ in the right-hand side of \eqref{eqgekeler} already
calculates the limiting value.  If $\ell$, but not its square, divides
$\Delta_g$, then \eqref{eqgekeler} does not stabilize at $r=1$.
However, if instead we ask for the proportion of {\em cyclic } elements of
$\gl_2(\ff_\ell)$ with characteristic polynomial, then we again
have a finite expression which computes
$\nu_\ell^{\operatorname{G}}(g)$.

Returning to the context of abelian surfaces, the same ``smoothness of
centralizers'' argument shows that if $\ell \nmid \Delta_f$, then the proportion in
\eqref{eqdefnuell} calculates the $\gsp_4$-analogue of the limit in
\eqref{eqgekeler}.  Condition \ref{enmax}, which corresponds to the
local condition $\ell^2\nmid \Delta_g$, is why passing to cyclic shape
in \eqref{eqredefnuell} again allows us to compute in $\ff_\ell$,
as opposed to $\integ_\ell$.

\section{Main result}

In the following, we will have several occasions to consider conditionally convergent infinite products.  For a sequence of numbers $\st{a_\ell}$ indexed by finite primes, let
\begin{equation}
\label{eqcondcvg}
\prod_\ell a_\ell = \lim_{X \ra \infty}\prod_{\ell < X}a_\ell.
\end{equation}
With this convention, we have $(\prod_\ell a_\ell) \cdot (\prod_\ell b_\ell) = \prod_\ell (a_\ell b_\ell)$.

For a number field $L$, let $h(L)$, $\omega_L$ and $R_L$ denote, respectively, the class number, number of roots of unity, and regulator of $L$.

\begin{theorem}
\label{thmain}
Let $f$ be a degree $4$ $q$-Weil polynomial which is ordinary,
principally polarizable, Galois and maximal.  Let $K_f$ be the
splitting field of $f$, and let $K^+_f$ be its maximal totally real
subfield.  Then
\begin{equation}
\label{eqmain}
\nu_\infty(f) \prod_{\ell} \nu_\ell(f) = \oneover{\omega_K}\frac{h(K_f)}{h(K_f^+)}.
\end{equation}
\end{theorem}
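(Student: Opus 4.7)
The plan is to chain together the matching identifications in Proposition~\ref{propmatchell} and Lemmas~\ref{lemmatchp},~\ref{lemmatchinfinity}, convert the resulting product of Euler factors into a product of Dirichlet $L$-values at $s=1$, and then apply the analytic class number formula to $K$ and $K^+$ to extract the ratio $h(K)/(\omega_K h(K^+))$.

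First, I would replace $\nu_\ell(f)$ by $\nu_\ell(K)$ at every prime using the matching lemmas. Expanding \eqref{eqdefnuk} and invoking the conditional-convergence convention \eqref{eqcondcvg}, I would swap the two products:
\[
\prod_\ell \nu_\ell(f) \;=\; \prod_\ell \prod_{\chi \in S(K)} \frac{1}{1-\chi(\ell)/\ell} \;=\; \prod_{\chi \in S(K)} L(1,\chi).
\]
Each $\chi \in S(K) = \gal(K/\rat)^* \setcomp \gal(K^+/\rat)^*$ is a nontrivial character, so the Euler product converges (in the sense of \eqref{eqcondcvg}) to the nonzero value $L(1,\chi)$.

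Next, I would apply the Artin factorization $\zeta_K(s) = \zeta_{K^+}(s) \prod_{\chi \in S(K)} L(s,\chi)$, which is a direct consequence of the partition of characters of $\gal(K/\rat)$ into those trivial on $\gal(K/K^+)$ and the rest. Taking residues at $s=1$ and applying the analytic class number formula -- with signature $(r_1,r_2) = (0,2)$ for the CM quartic $K$ and $(2,0)$ for the real quadratic $K^+$, together with $\omega_{K^+} = 2$ -- yields
\[
\prod_{\chi \in S(K)} L(1,\chi) \;=\; \frac{\operatorname{Res}_{s=1}\zeta_K}{\operatorname{Res}_{s=1}\zeta_{K^+}} \;=\; \frac{2\pi^2\, h(K)\, R_K}{\omega_K\, h(K^+)\, R_{K^+}} \sqrt{\left|\frac{\Delta_{K^+}}{\Delta_K}\right|}.
\]
Multiplying by $\nu_\infty(f) = (4\pi^2)^{-1}\sqrt{|\Delta_K/\Delta_{K^+}|}$ from Lemma~\ref{lemmatchinfinity} cancels both the $\pi^2$ and the discriminant ratio, leaving
\[
\nu_\infty(f)\prod_\ell \nu_\ell(f) \;=\; \frac{h(K)\, R_K}{2\, \omega_K\, h(K^+)\, R_{K^+}}.
\]

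The final step is to identify $R_K/R_{K^+}$ with $2$. This is equivalent to showing that the Hasse unit index $Q = [\calo_K^\times : \mu_K \calo_{K^+}^\times]$ equals $1$, since for a CM field of degree $2g$ the standard comparison of regulator matrices gives $R_K/R_{K^+} = 2^{g-1}/Q$, which is $2/Q$ when $g=2$. This is where I expect the main work to lie: one must verify, in each of the cyclic and biquadratic cases allowed by \ref{engal}, that a fundamental unit of $K^+$ is not a root of unity times the square of a unit in $K$. Typically this is accomplished by an archimedean sign argument, or by observing that $K/K^+$ has no suitable unramified quadratic subextension in our configuration; alternatively the factor $Q$ could be absorbed into a refined automorphism-weighted count on the right-hand side, in line with Howe's formulas for principally polarized ordinary abelian varieties within an isogeny class.
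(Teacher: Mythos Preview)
Your argument is the paper's: the matching results turn $\prod_\ell\nu_\ell(f)$ into $\prod_{\chi\in S(K)}L(1,\chi)$, the Artin factorization identifies this with the ratio of residues of $\zeta_K$ and $\zeta_{K^+}$ at $s=1$, and the analytic class number formula together with Lemma~\ref{lemmatchinfinity} finishes the computation. Your signature constants and discriminant bookkeeping are correct.

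The only divergence is your last paragraph. The paper dispatches the regulator step in a single clause --- ``$\calo_K\units$ and $\calo_{K^+}\units$ agree up to torsion so $R_K=2R_{K^+}$'' --- with no further argument. You are right that this is exactly the assertion $Q=1$, and it is reasonable to flag it; but characterizing it as ``where the main work lies'' overstates matters relative to the paper's own treatment, and your fallback suggestion of absorbing $Q$ into a refined automorphism-weighted count on the right-hand side is misplaced here: the right-hand side of \eqref{eqmain} is the fixed quantity $h(K_f)/(\omega_K\,h(K_f^+))$, not an isogeny-class size, so there is nothing to adjust at this stage (that interpretation only enters in the subsequent corollary). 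Either assert $Q=1$ as the paper does, or supply a short justification or citation; do not leave it as an open-ended hedge.
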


\begin{proof}
We write $K$ and $K^+$ for $K_f$ and $K^+_f$.
By the analytic class number formula, the ratio of class numbers on the right-hand side of 
\eqref{eqmain} is
\[
\frac{h(K)}{h(K^+)} = \lim_{s \ra 1} \frac{(s-1)\zeta_K(s)}{(s-1)\zeta_{K^+}(s)}
\frac{\sqrt{\abs{\Delta_K}}  2^2\omega_K R_{K^+}}
{\sqrt{\abs{\Delta_{K^+}} (2\pi)^2 \omega_{K^+} R_K}}.
\]
For a finite
abelian extension $L/\rat$, we have
\[
\lim_{s \ra 1}(s-1)\zeta_L(s) = \prod_{\chi \in \gal(L/\rat)^*\setcomp \id}
L(1,\chi),
\]
where the product is over nontrivial characters of $\gal(L/\rat)$ and, as in \eqref{eqcondcvg},
we interpret $L(1,\chi)$ as the conditionally convergent product
\[
L(1,\chi) = \lim_{X \ra \infty} \prod_{\ell <
  X}\oneover{1-\chi(\ell)/\ell}.
\]
With our convention on conditionally convergent products, 
\[
\prod_{\chi \in \gal(L/\rat)^*\setcomp \id} L(1,\chi) = 
\prod_{\ell}\left(\prod_{\chi \in \gal(L/\rat)^*\setcomp \id}
\oneover{1-\chi(\ell)/\ell}\right).
\]
By hypothesis $K$ and $K^+$ are abelian, as they are Galois over $\rat$ of degrees $4$ and $2$, respectively.  By definition \eqref{eqdefnuk}, for each $\ell$, 
\[
\frac{\begin{displaystyle} \prod_{\chi \in \gal(K/\rat)^*\setcomp \id} \end{displaystyle} \oneover{1-\chi(\ell)/\ell}}{\begin{displaystyle} \prod_{\chi\in \gal(K^+/\rat)^*\setcomp \id} \end{displaystyle} \oneover{1-\chi(\ell)/\ell}} = \nu_\ell(K).
\]
Finally,  $\calo_K\units$ and $\calo_{K^+}\units $ agree up to torsion so
$R_K = 2R_{K^+}$, and $K^+$ is a real field so $\omega_{K^+}=2$.  Consequently,
\begin{align*}
\frac{h(K)}{h(K^+)}& =
\omega_K \oneover{4\pi^2}\sqrt{\frac{\abs{\Delta_K}}{\abs{\Delta_{K^+}}} }
\prod_\ell \nu_\ell(K)\\
&= \omega_K \nu_\infty(f) \prod_\ell \nu_\ell(f)
\end{align*}
by Proposition \ref{propmatchell} and Lemmas \ref{lemmatchp} and \ref{lemmatchinfinity}.
\end{proof}

In fact, \eqref{eqmain} has a natural interpretation in terms of abelian varieties. 

\begin{corollary}
For $f$ as in Theorem \ref{thmain}, suppose further that $\gal(K_f/\rat)$ is cyclic.  Then
\begin{equation}
\label{eqcormain}
\nu_\infty(f) \prod_\ell \nu_\ell(f) = \#\stk A_2(\ff_q;f),
\end{equation}
the number of isomorphism classes of principally polarized abelian surfaces over $\ff_q$ with characteristic polynomial of Frobenius $f$, weighted by (inverse) size of automorphism group.
\end{corollary}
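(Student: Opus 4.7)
The plan is to identify the arithmetic expression on the right-hand side of \eqref{eqmain} with the weighted count $\#\stk A_2(\ff_q;f)$. By Theorem \ref{thmain} we already have
\[
\nu_\infty(f)\prod_\ell \nu_\ell(f) = \frac{1}{\omega_K}\cdot\frac{h(K_f)}{h(K_f^+)},
\]
so the corollary reduces to proving the identity
\[
\#\stk A_2(\ff_q;f) = \frac{1}{\omega_K}\cdot\frac{h(K_f)}{h(K_f^+)}.
\]
This is a statement purely about counting principally polarized abelian surfaces in the isogeny class $\cali_f$, with no local factors or characters remaining.

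The strategy is to parametrize $\stk A_2(\ff_q;f)$ in two stages. First, conditions \ref{enord} and \ref{enmax} place us inside Deligne's equivalence of categories: isomorphism classes of abelian surfaces $X \in \cali_f$ correspond bijectively to isomorphism classes of rank-one projective $\O_{K_f}$-modules equipped with the Frobenius action by $\varpi_f$, i.e.\ to elements of $\operatorname{Pic}(\O_{K_f})$. Thus the unpolarized isogeny class contains exactly $h(K_f)$ isomorphism classes, and for each such $X$ one has $\Aut(X) = \O_{K_f}^{\times}$. Second, I would invoke Howe's classification of principal polarizations on ordinary simple CM abelian varieties (the cited work of \cite{howe95} together with the more refined results Howe shared with the authors): under assumption \ref{enpp} the set of principal polarizations on a given $X$ is nonempty and forms a torsor under a group expressible via the unit and class groups of $K_f/K_f^+$. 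Passing to polarized isomorphism classes, the automorphism group $\Aut(X,\lambda)$ is the group of norm-one units of $\O_{K_f}$, which in the cyclic quartic CM setting coincides with $\mu_{K_f}$, so $\#\Aut(X,\lambda) = \omega_K$.

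Summing over the isomorphism classes of $X$ and weighting by $1/\#\Aut(X,\lambda)$, the totals collapse, after a cancellation between the torsor sizes and the fiber counts over $\operatorname{Pic}(\O_{K_f^+})\to \operatorname{Pic}(\O_{K_f})$, into the clean ratio $h(K_f)/h(K_f^+)$, divided by the common automorphism factor $\omega_K$. The cyclicity hypothesis on $\gal(K_f/\rat)$ enters crucially here: it guarantees that there is essentially a single CM type up to conjugation, that the norm map on units behaves as cleanly as possible, and that the narrow and wide class groups of $K_f^+$ are not distinguished in the final count. Combining this equality with Theorem \ref{thmain} then yields \eqref{eqcormain}.

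The main obstacle is the middle step: tracking exactly how principal polarizations are distributed across the $h(K_f)$ unpolarized isomorphism classes, and verifying that the combinatorics does indeed collapse to $h(K_f)/h(K_f^+)$ rather than some twist by the narrow class number, a unit index, or a $2$-power from CM types. In the biquadratic case these corrections do not vanish, which is precisely why the corollary is stated only for cyclic $\gal(K_f/\rat)$; in the cyclic case the relevant norm-index and CM-type counts are all trivial, and Howe's formula collapses to the desired expression.
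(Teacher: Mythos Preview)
Your reduction is exactly the paper's: invoke Theorem~\ref{thmain} and then show that the weighted count equals $\omega_K^{-1}\,h(K_f)/h(K_f^+)$, using that every $(X,\lambda)$ has $\#\Aut(X,\lambda)=\omega_K$ so that the weighted count is $\omega_K^{-1}$ times the unweighted one. The only difference is at the step you yourself flag as the ``main obstacle''. Rather than deriving the polarization count from Deligne's equivalence and Howe's classification, the paper simply cites Lauter \cite{lauterCMAS} (Thm.~3.1 and Cor.~3.2), which under the cyclic and maximality hypotheses gives the unweighted size of the isogeny class as
\[
\#\mathfrak C(K)=\frac{h(K)}{h^+(K^+)}\,[(\calo_{K^+}^\times)^+:\mathrm{N}_{K/K^+}(\calo_K^\times)],
\]
and then uses the identity $[(\calo_{K^+}^\times)^+:\mathrm{N}_{K/K^+}(\calo_K^\times)]=h^+(K^+)/h(K^+)$ to collapse this to $h(K)/h(K^+)$. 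So your instinct that the narrow class number and a norm-index of units must cancel is exactly right; the paper just imports that cancellation from a known result rather than reconstructing it from Howe's machinery. Your Deligne--Howe sketch would reprove Lauter's formula in this special case, which is fine but more work than needed.
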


\begin{proof}
If $(X,\lambda) \in \stk A_2(\ff_q;f)$, then $\#\aut(X,\lambda) =
\omega_K$.  By \cite{lauterCMAS}, $h(K)/h(K^+)$ is the (unweighted)
size of $\stk A_2(\ff_q;f)$.  Indeed, under the hypothesis
that $\gal(K_f/\rat)$ is cyclic of order $4$ and the maximality
condition \ref{enmax}, \cite[Thm.\ 3.1 and Cor.\ 3.2]{lauterCMAS}
show that the size of the isogeny class parametrized by $f$ is (in
their notation)
\[
\# {\mathfrak C}(K) =
\frac{h(K)}{h^+(K^+)}[(\calo_{K^+}\units)^+:\operatorname{N}_{K/K^+}(\calo_K\units)],
\]
where $h^+(K^+)$ denotes the narrow class group of $K^+$, and
$(\calo_{K^+}\units)^+$ denotes the totally positive units of
$\calo_{K^+}$.  Since 
\[
\null[(\calo_{K^+}\units)^+:\operatorname{N}_{K/K^+}(\calo_K\units)]
= \frac{h^+(K^+)}{h(K^+)},
\]
we find that $\#{\mathfrak C}(K) = h(K)/h(K^+)$.

 Now invoke Theorem \ref{thmain}.
\end{proof}

In fact, unpublished work of Howe shows that in much greater generality, $h(K)/h(K^+)$ computes the size of a suitable isogeny class.  Thus, we expect \eqref{eqcormain} to also hold when $K_f$ is biquadratic.

\bibliographystyle{abbrv}
\bibliography{bibliography}

\end{document}